\documentclass[reqno]{amsart}

\usepackage{graphicx,pdfsync,amssymb,mathrsfs,amsfonts,amsbsy,color,esint,mathtools,tikz,mdframed}
\usetikzlibrary{positioning}
\usepackage[shortlabels]{enumitem}
\usepackage{makecell}

\mathtoolsset{showonlyrefs}

\usepackage{booktabs}
\usepackage{array}
\usepackage[table]{xcolor}
\usepackage{hyperref}
\usepackage{amsfonts} 

\hypersetup{
colorlinks=true,
linkcolor=blue,
filecolor=blue,
urlcolor=blue,
citecolor=blue,
}

\usepackage[utf8]{inputenc}
\usepackage[T1]{fontenc}

\DeclareMathOperator{\Supp }{supp}

\DeclareMathOperator{\sign}{sign}

\newtheorem{theorem}{Theorem}[section]
\newtheorem{lemma}[theorem]{Lemma}
\newtheorem{proposition}[theorem]{Proposition}

\newtheorem{corollary}[theorem]{Corollary}
\newtheorem{remark}[theorem]{Remark}

\setlist[enumerate]{itemsep=3pt}
\setlist[itemize]{itemsep=3pt}

\def \T  {\mathbb{T}} 
\def \R {\mathbb{R}}  

\def \p {\partial}

\def \ep {\epsilon}
\def \om {\omega}
\def \Om {\Omega}

\numberwithin{equation}{section}

\begin{document}


\title[Asymptotic stability of shear flows]{Asymptotic stability of shear flows for 2D Euler equations at Yudovich regularity}

\author{Dengjun Guo}

\address{Academy of Mathematics and Systems Science, Chinese Academy of Sciences, Beijing, China}

\email{djguo@amss.ac.cn}

\author{Xiaoyutao Luo}

\address{Academy of Mathematics and Systems Science, Chinese Academy of Sciences, Beijing, China}

\email{xiaoyutao.luo@amss.ac.cn}

\subjclass[2020]{35Q31,35B40}
 
\keywords{Asymptotic Stability, Shear Flows, 2D Euler equations}
\date{\today}

\begin{abstract}
The nonlinear asymptotic stability of shear flows in the 2D Euler equations has traditionally been linked to inviscid damping in the periodic setting. Since Gevrey regularity is required to suppress the ``echo'' phenomenon, asymptotic stability is known to be impossible in Sobolev spaces.

In this paper, we identify a distinct stabilizing mechanism available in the infinite channel: the advection of vorticity to spatial infinity. We establish nonlinear asymptotic stability for the 2D Euler equations in the infinite channel $\mathbb{R}\times[0,1]$ at the minimal regularity of the Yudovich class ($L^{\infty}$ vorticity). Specifically, for a class of non-negative shear flows with a curvature bound, any $L^\infty$-small, compactly supported vorticity perturbation leads to decay on compact subsets and weak convergence to zero.  
\end{abstract}

\maketitle

\section{Introduction}\label{sec:intro}

Shear flows constitute a fundamental class of coherent, steady solutions to the 2D Euler equations of an incompressible, ideal fluid. Their stability has been a subject of intense study since the classical linear analyses of Kelvin~\cite{Kelvin1887} and Rayleigh~\cite{Rayleigh1879} in the late nineteenth century. Even though the foundational mechanisms of linear stability and algebraic decay were identified early on, establishing nonlinear asymptotic stability in the 2D Euler equations remained elusive for over a century. While Arnold's variational methods~\cite{MR1612569} successfully established nonlinear \emph{orbital stability} for certain steady flows, such frameworks are inherently unequipped to capture the long-time asymptotic convergence to a steady state.

The first nonlinear asymptotic stability in the 2D Euler equations was achieved much more recently in the landmark breakthrough by Bedrossian and Masmoudi~\cite{BedrossianMasmoudi15}. Following this work, existing nonlinear stability results for shear  flows~\cite{BedrossianMasmoudi15,MR4076093,MR4628607,MR4740211} are restricted to the periodic setting. Relying on the delicate mechanism of inviscid damping, the perturbation is required to be Gevrey regular~\cite{MR2796139,MR4630602}. This regularity requirement is sharp: below a certain Gevrey class, long time instability occurs~\cite{MR4630602}, and in Sobolev spaces, nontrivial steady states can persist arbitrarily close to the shear flow~\cite{MR2796139,MR4595614}, precluding asymptotic relaxation in the periodic setting.

The stringent regularity requirement stands in stark contrast with the physical reality of ideal fluids. While classical works by Wolibner and H\"older in the 1930s established global well-posedness for smooth data, the natural state space for the 2D Euler equations is much rougher. In 1963, Yudovich~\cite{MR158189} made a fundamental breakthrough by proving global existence and uniqueness for initial data with merely bounded vorticity ($L^1 \cap L^{\infty}$), completely dispensing with Sobolev or H\"older regularity constraints. This functional setting, now known as the Yudovich class, mirrors the underlying Eulerian dynamics, where vorticity is transported along particle trajectories and has been proven to be sharp~\cite{Vishik1,Vishik2,ABCDGJK}.

Yet, over six decades since the Yudovich theory of global wellposedness, extending \emph{asymptotic stability} to such rough data has remained a major challenge. This difficulty reflects a broader phenomenon in the analysis of nonlinear PDEs: it is exceptionally rare for a nonlinear system to exhibit asymptotic stability exactly at the threshold of its well-posedness, particularly when lacking dissipation. Because the Euler equations lack any smoothing mechanism, rough perturbations remain rough forever; in fact, one of the hallmarks of 2D Euler equations is the ``generic'' loss of smoothness~\cite{MorgulisShnirelmanYudovich08,DrivasElgindiJeong24,alazardsaid2026} and small scale formation~\cite{Denisov09,KS14,Z24} in infinite time (see Section \ref{subsub:growth} for a discussion of underlying growth mechanisms).

This massive ``regularity gap'' between well-posedness theory and stability theory raises a fundamental question:
\vspace{0.5em}
\begin{center}
\textit{Are there any steady states of the 2D Euler equations that are asymptotically stable at the Yudovich regularity?}
\end{center}
\vspace{0.5em}

In this paper, we answer this question affirmatively for a large family of shear flows in the infinite channel  $\R \times [0,1]$. We propose a distinct stability mechanism available in the infinite channel: the advection of vorticity to spatial infinity. We show that this mechanism is robust enough to stabilize perturbations of Yudovich class---a minimal regularity for which the 2D Euler equations are well-posed. This represents the first asymptotic stability result at the Yudovich regularity for the 2D Euler equations.

To formalize the setting, consider the 2D Euler equations in the infinite channel $\Omega: = \R \times [0,1]$. Given a $C^2([0,1])$ shear flow profile $f(y)$,  the vorticity perturbation $\om : \Omega \times [0,\infty) \to \R$ satisfies 
\begin{equation}\label{eq:eu_eq}
\begin{cases}
	\p_t \om + f(y)\p_x \om + u \cdot \nabla \om = -f''(y)u^y &\\ 
	\om|_{t= 0 } = \om_{in} &
\end{cases}	
\end{equation}
where the velocity $u: \Om \times [0,\infty)  \to \R^2$ is recovered via $u=(u^x,u^y) = \nabla^{\perp}\Delta^{-1}\omega$,  $ \nabla^\perp = (- \p_y , \p_x)$,  subject to no-penetration boundary conditions.

\subsection{Main result}
The main result below establishes asymptotic stability for general $C^2$ shear flows subject to Yudovich perturbations. This relaxes the regularity assumptions on both the background flow and the perturbation well beyond the Gevrey classes required in previous studies.
 
We begin by defining the class of admissible shear flow profiles $\mathcal{F} \subset C^2([0,1])$. Let $C_*>0$ be a universal constant to be specified, and denote by $d(y) := \min \{y, 1-y \}$ the distance to the channel boundary  $\p \Omega$. We define the class $f \in \mathcal{F}$ such that there exists $\delta > 0$ satisfying:
\begin{itemize}
    \item Non-negativity:
    \begin{equation}\tag{H1}\label{eq:thm:shear assump f 1}
    f(y) \ge \delta d(y) .
    \end{equation}

    \item Curvature bound:
    \begin{equation}\tag{H2}\label{eq:thm:shear assump f 2}|f''(y)| \leq C_* \delta d(y).
    \end{equation}
\end{itemize}
\begin{theorem}\label{thm:shear}
There exists a universal constant $C_*>0$ such that for any shear flow $f \in \mathcal{F}$, \eqref{eq:eu_eq} is asymptotically stable in the infinite channel $\Omega = \mathbb{R} \times [0,1]$ with respect to compactly supported Yudovich perturbations.

More precisely, for any $f \in \mathcal{F}$, there exists $\epsilon = \epsilon(f) > 0$ such that for any compactly supported $\omega_{in} \in L^\infty(\Omega)$ satisfying
\begin{equation}\label{eq:thm:shear perturb assump}
\|\omega_{in}\|_{L^\infty(\Omega)} \leq \ep,
\end{equation}
the unique Yudovich solution\footnote{Despite the unboundedness of $\Omega$ and the non-decaying background shear, the solution preserves the $L^1 \cap L^\infty$ regularity (but not its size) of the initial data; see Section \ref{sec:shear}} $\omega(t)$ to \eqref{eq:eu_eq} satisfies
\begin{equation}\label{eq:thm:shear conclusion}
\lim_{t \to \infty} \|\omega(t)\|_{L^1 (K)} = 0 \quad \text{for any compact set $K \subset \Om$. }
\end{equation}

Consequently, $\omega(t) \rightharpoonup 0$ weakly in $L^p(\Omega)$ for every $1 < p < \infty$ as $t\to \infty $.
\end{theorem}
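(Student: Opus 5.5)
The mechanism is that vorticity is carried to $x=+\infty$ by the positive background shear. We would make this quantitative through a monotone first moment that stays bounded: a bounded, weakly increasing drift functional forces the time integral of a coercive, locally positive quantity to be finite, and this gives local decay.

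We work with the total vorticity $W = f'(y) + \omega$ and the total velocity $U = (f(y),0) + u$. Then $W$ is transported by $U$: $\partial_t W + U\cdot\nabla W = 0$. The perturbation $\omega$ itself obeys \eqref{eq:eu_eq}, so $\|\omega(t)\|_{L^\infty}$ is not conserved. The source is $-f''u^y$ and, by \eqref{eq:thm:shear assump f 2}, is bounded by $C_*\delta d(y)|u^y|$.

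\textbf{Step 1: a priori bounds.} We would first prove, by a bootstrap, that $\|\omega(t)\|_{L^\infty}\le 2\epsilon$ for all $t$ and that $\|\omega(t)\|_{L^1}$ grows at most polynomially. The same kind of bound is needed for the positive and negative parts of $\omega$. The ingredients are:
\begin{itemize}
\item the Lagrangian representation: along trajectories of $U$, $\dot\omega = -f''(Y)u^y$;
\item the elliptic estimate $|u^y(x,y)|\lesssim d(y)\|\omega\|_{L^\infty}^{1/2}\|\omega\|_{L^1}^{1/2}$ in the channel, which we obtain from the Dirichlet condition on the stream function;
\item the fact that on trajectories near the boundary the horizontal speed $f+u^x\ge \delta d - O(\epsilon d)$ is positive, thanks to \eqref{eq:thm:shear assump f 1} and $u^y=0$ on $\partial\Omega$.
\end{itemize}
The source accumulates only for as long as a particle sits inside the support of $u^y$. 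We would therefore try to show that $u^y$ decays in time along each trajectory, using the outflow from Step 2, so that the source is integrable in time.

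\textbf{Step 2: the monotone moment.} Set $M(t)=\int x\,\omega\,dx\,dy$. Since $\int u^x\omega=0$ and $\int x\,u^y\,f''$ is controlled, we expect
\[
M'(t)=\int f\,\omega \;-\; \int x\, f'' u^y .
\]
A cleaner choice is to pass to the stream function $\psi=\Delta^{-1}\omega$ with Dirichlet data on $\partial\Omega$ and use an energy-type identity. Integrating by parts twice gives
\[
\frac{d}{dt}\int x\,\omega\,\psi \;=\; \int f\,|\nabla\psi|^2\text{-type terms} + \text{cubic errors}.
\]
By \eqref{eq:thm:shear assump f 1} this is $\ge \delta\int d\,|\nabla\psi|^2$, and by \eqref{eq:thm:shear assump f 2} the $f''$ terms are absorbed, which is exactly what fixes $C_*$. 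The cubic errors are $O(\epsilon)$ times the main term, using the Step 1 bounds.

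\textbf{Step 3: boundedness and conclusion.} The moment functional is bounded above in terms of conserved or controlled quantities. Here we use Kelvin circulation and the energy $\int |u|^2$, which is conserved modulo the shear interaction and is bounded by an energy-Casimir (Arnold-type) argument valid for $f$ satisfying \eqref{eq:thm:shear assump f 2}. Boundedness of the moment then gives
\[
\int_0^\infty\!\!\int d(y)\,|\nabla\psi|^2\,dx\,dy\,dt<\infty .
\]
Combined with the uniform $L^\infty$ bound and the transport structure, this yields \eqref{eq:thm:shear conclusion}. The way to see this is that mass in a compact set $K$ produces a definite local amount of $\int_K d|\nabla\psi|^2$, since the inverse Laplacian is coercive locally in $H^{-1}$. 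Mass in $K$ persists over unit time intervals because speeds are bounded, and by the time integrability such intervals must become rare. A compactness argument (Yudovich stability in $L^1_{loc}$) then upgrades this to full convergence. Weak $L^p$ convergence follows from the uniform $L^p$ bound together with the $L^1(K)$ decay, tested against $C_c$ functions and using density.

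\textbf{Main obstacle.} The hardest step is Step 1 together with the error control in Step 2. The source $-f''u^y$ breaks $L^\infty$ conservation, and the domain is unbounded with nondecaying background, so $L^1$ norms could grow. We would try to close with a near-boundary weighted norm $\|\omega/d\|$ along trajectories, exploiting that both $f''$ and $u^y$ vanish linearly at the walls. If that fails, the fallback is to treat the full $W$ as transported, which keeps $\|W\|_{L^\infty}$ bounded trivially, and to recover smallness of $\omega$ only in the weak sense needed in Step 3.
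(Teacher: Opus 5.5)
\textbf{There is a genuine gap.} It lies in Steps 2--3, which carry the whole argument.

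\textbf{The drift functional cannot be bounded.} Your functional has weight $x$, which increases in the direction of escape. It therefore yields $\int_0^\infty(\text{dissipation})\,dt<\infty$ only if it is bounded above. The escape mechanism you want to exploit makes it unbounded. Already for the passive Couette problem $\partial_t\omega+y\partial_x\omega=0$ one has $\int x\,\omega(t)=\int x\,\omega_{in}+t\int y\,\omega_{in}$. Likewise $\int x\,\omega\psi=-\int x|u|^2$ grows once energy moves downstream. No conserved quantity (circulation, excess energy, Casimirs) controls a first moment in $x$, so the claimed upper bound in Step 3 fails and the time-integrability collapses.

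\textbf{The signed functional misses the back-flow.} For sign-changing $\omega$, $\int f\omega$ has no sign. Your identity $\int u^x\omega=0$ is false in the channel: $\int u^x\omega=\tfrac12\int_{\mathbb R}\big((u^x)^2(x,0)-(u^x)^2(x,1)\big)\,dx=\iint K^x_2(z,z')\omega(z)\omega(z')\,dz\,dz'$, with $K^x_2$ the symmetric image kernel. This nonzero term is exactly the self-induced back-flow near the walls where $f$ degenerates. That back-flow is the crux of the theorem, and your proposal has no estimate for it. The $\psi$-version has the same problem in hidden form: besides $\int f|u|^2$, the derivative of $\int x|u|^2$ contains $\int x f' u^x u^y$ and a pressure term $\int p\,u^x$. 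These are quadratic, carry an unbounded or nonlocal weight, and cannot be absorbed by $\delta\int d|\nabla\psi|^2$.

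\textbf{How the paper avoids this.} It uses $M(t)=\int|\omega|e^{-ax}$ with $a=\pi/2$.
\begin{itemize}
\item The weight decreases downstream, so $M\ge 0$, and $dM/dt\le-\frac{\pi\delta}{4}\int d|\omega|e^{-ax}$ gives integrability for free.
\item DiPerna--Lions renormalization with $\beta(s)=|s|$ makes the transport term $-a\int f|\omega|e^{-ax}\le -a\delta\int d|\omega|e^{-ax}$ sign-definite for sign-changing data.
\item Since $a<\pi$, the weight is compatible with the $e^{-\pi|x-x'|}$ decay of the channel kernel.
\item The back-flow term becomes the symmetric double integral $\iint|K^x||\omega(z)||\omega(z')|e^{-ax}$. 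Splitting $(y,y')$ into the four regions $A_1,\dots,A_4$ places the factor $d(\cdot)$ on whichever variable has the short range, giving the bound $\|\omega\|_{L^\infty}\int d|\omega|e^{-ax}$ (Lemma~\ref{lemma:u^x energy}).
\item The stretching term is handled by $\int d(y)e^{-a(x-x')}|K^y(z,z')|\,dz\lesssim d(y')$.
\end{itemize}

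\textbf{The endgame gives only weak decay.} Even if Step 2 held, your dissipation $\int d|\nabla\psi|^2$ is an $H^{-1}$-type quantity. It does not bound $\int_K|\omega|$ from below for rough oscillating vorticity: $\omega=\mathbf 1_K\sin(kx)$ has order-one $L^1(K)$ mass but $\|\nabla\psi\|_{L^2}\to0$ as $k\to\infty$. So at best you would get weak local decay, not $\|\omega(t)\|_{L^1(K)}\to0$. Yudovich stability provides no $L^1_{loc}$ compactness to upgrade this, because of filamentation. In the paper the dissipation is itself a weighted $L^1$ mass. Hence your persistence-over-short-intervals idea, which matches Lemma~\ref{lemma:final}, applies directly on $\{d(y)\ge 1/N\}$, and the $L^\infty$ bound handles the boundary strip.

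\textbf{Step 1 is unnecessary and partly wrong.} Transport of the total vorticity gives $\|\omega(t)\|_{L^\infty}\le\|f''\|_{L^\infty}+\epsilon\le C_*\delta/2+\epsilon$. This is small relative to $\delta$, which is exactly what the absorption requires, so no bootstrap is needed. Moreover, your claimed bound $|u^y|\lesssim d\,\|\omega\|_{L^\infty}^{1/2}\|\omega\|_{L^1}^{1/2}$ is false: a patch touching the wall produces $|u^y|\gtrsim d\log(1/d)$.
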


\begin{remark}[Stability mechanism]

In the infinite channel $\mathbb{R}\times[0,1]$, asymptotic stability arises from the advection of vorticity to spatial infinity---a macroscopic phenomenon we term \textbf{vorticity escape}---essentially the formation of traveling wave-like structures. 

This mechanism should not be confused with passive transport by a given shear $f(y) \geq 0$. For the passive equation
\[
\partial_t\omega+f(y)\partial_x\omega=0,
\]
the vorticity is transported with no back-flow region. In the nonlinear Euler dynamics, by contrast, the perturbation may produce regions with adverse horizontal velocity that can in principle trap vorticity, especially near the boundary where the background shear can vanish.

For the Couette flow $f(y) = y \in \mathcal{F}$, the low $L^\infty$ regularity assumption contrasts with the periodic case~\cite{BedrossianMasmoudi15,MR4076093}, where the confinement of vorticity necessitates high Gevrey regularity.

\end{remark}

\begin{remark}[Necessity of no stagnation]
With translational structures stabilizing the flow, the only remaining obstruction is \textbf{stationarity}.

While the condition \eqref{eq:thm:shear assump f 1} allows for boundary degeneracy ($f(0)=0$ or $f(1) =0$), it implies $ f > 0$ in the interior, preventing the formation of  interior trapped vortices. This structural condition is sharp: in joint work~\cite{steady} with G. Qin, we prove the existence of steady states near such a sign-changing shear flow.

\end{remark}

\begin{remark}[The applicability]
Our class $\mathcal{F}$ includes all linear Couette flows $(ay,0)  $ with $a>0$ and nearby non-negative $C^2$ perturbations satisfying \eqref{eq:thm:shear assump f 2}. However, the smallness of $C_*$ excludes   flows that vanish at both walls.

 For strictly positive shears $\inf f>0$, the criterion reduces to a curvature upper bound proportional to $\inf f$. The delicate case in Theorem~\ref{thm:shear}  is therefore the boundary-degenerate regime, where the self-induced back-flow discussed above must be controlled without a uniform lower bound on the background velocity.


\end{remark}

\subsection{Exponential decay for non-stagnant flow}
Theorem \ref{thm:shear} provides a robust criterion for shear flow asymptotic stability without a convergence rate. The degeneracy near the boundary impedes uniform decay rates. We show that for non-stagnant flows, namely $f(y) \ge c >0$,  the stabilizing mechanism of advection yields exponential decay of the local vorticity, manifesting the exponential decay of the Dirichlet Green function in the infinite channel.

\begin{theorem}\label{thm:shear2 f>c}
There exists a universal constant  $C_*>0$ such that the following holds. Let $f \in C^2([0,1])$ be a shear profile satisfying 
\begin{equation}\label{eq:thm 2 f >c shear assump}
m_f=   \inf_{y\in [0,1]} f(y) > 0, \qquad      \| f''\|_{L^\infty([0,1])}  \le C_*  m_f . 
\end{equation}
Then  for any compactly supported $ \om_{ in} \in  L^\infty(\Omega)$  satisfying
\begin{equation}\label{eq:thm 2 f >c perturb assump}
\|\om_{ in}  \|_{L^\infty(\Omega)} \leq  C_*  m_f  , 
\end{equation}  
the unique Yudovich solution  $\om(t)  $ to \eqref{eq:eu_eq} satisfies, for any $R>0$,  
 \begin{equation}\label{eq:thm:shear2 conclusion}
        \|\om(t) \|_{ L^\infty ( (-\infty,R]\times [0,1])} \lesssim_{R} e^{- \frac{1}{2} m_f t} \qquad   t \ge 0  .
    \end{equation}

\end{theorem}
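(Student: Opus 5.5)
Since the vorticity is transported, the plan is to control the flow map. Write $\om$ in Lagrangian form. The forcing $-f''(y)u^y$ is linear in the perturbation, so on a characteristic $X(t)$ generated by $f(y)e_1+u$ we have
\[
\frac{d}{dt}\om(X(t),t)=-f''(Y(t))\,u^y(X(t),t).
\]
We will run a continuity (bootstrap) argument on the hypothesis
\[
\|u(t)\|_{L^\infty(\Omega)}\le \tfrac14 m_f \quad\text{for all } t\in[0,T].
\]
Under this hypothesis every trajectory has horizontal speed $\dot X^x=f(Y)+u^x\ge \tfrac34 m_f$. Hence a particle that starts in $\Supp\om_{in}\subset[-L,L]\times[0,1]$ satisfies $X^x(t)\ge -L+\tfrac34 m_f t$. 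The vorticity generated by the forcing is not compactly supported, so the argument must also track it. Since $|f''|\le C_*m_f$, the forcing at a point is at most $C_*m_f|u^y|$.

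\textbf{Step 1: a weighted norm.} Introduce the weighted quantity
\[
W(t)=\sup_{(x,y)}\, e^{-\lambda x}\,|\om(x,y,t)|\,e^{\beta t},
\]
with $\lambda$ small and $\beta$ to be chosen. The weight $e^{-\lambda x}$ decays in the direction of motion, so transport to the right at speed at least $\tfrac34 m_f$ produces decay: along characteristics,
\[
\frac{d}{dt}e^{-\lambda X^x}=-\lambda\dot X^x e^{-\lambda X^x}\le -\tfrac34\lambda m_f\, e^{-\lambda X^x}.
\]
Choosing $\lambda$ of order $1$ makes the weighted vorticity gain the factor $e^{-\frac34\lambda m_f t}$. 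To reach the rate $\tfrac12 m_f$ we need $\lambda\gtrsim 2/3$, which fixes $\beta=\tfrac12 m_f$ with some slack.

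\textbf{Step 2: the elliptic estimate (main obstacle).} We must bound $\sup e^{-\lambda x}|u^y|$ and $\|u\|_{L^\infty}$ in terms of $W$ and $\|\om\|_{L^\infty}$. The Dirichlet Green function on $\R\times[0,1]$ decays like $e^{-\pi|x-x'|}$, and the velocity kernel behaves like $|z|^{-1}$ locally and like $e^{-\pi|x-x'|}$ far away. So for $\lambda<\pi$ the convolution $e^{-\lambda x}\int K(x-x',y,y')\om\,dx'dy'$ is bounded by
\[
C\sup_{x'}\Bigl(e^{-\lambda x'}|\om|\Bigr)\int e^{(\lambda-\pi)|x-x'|}\dots,
\]
which is finite. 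This gives
\[
\sup e^{-\lambda x}|u|\le C_\lambda \sup e^{-\lambda x}|\om|.
\]
Closing the forcing term then requires $C_*C_\lambda\ll\tfrac34\lambda$, and this is where the universality of $C_*$ enters.

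The weight $e^{-\lambda x}$ grows as $x\to-\infty$, so the bound above says nothing on its own about $\|u\|_{L^\infty}$ on the far right, where the vorticity lives at late times. For that we use a separate estimate. Initially $\|\om\|_{L^\infty}\le C_*m_f$. Along characteristics, $\|\om(t)\|_{L^\infty}\le \|\om_{in}\|_{L^\infty}+C_*m_f\int_0^t\|u\|_{L^\infty}\,ds$, which can grow linearly. The remedy is to combine both estimates: the weighted bound controls the forcing at point $x$ by $e^{\lambda x}e^{-\beta s}$ times a constant, and the unweighted bound handles $x\ge$ some level. I expect this interplay to be the main obstacle.

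\textbf{Step 3: the $L^1$ bound and conclusion.} The simplest fix for the uniform velocity bound is to control $\|\om\|_{L^1}$. By Gronwall, with forcing at most $C_*m_f|u^y|$, one gets $\|\om\|_{L^1\cap L^\infty}\lesssim e^{CC_*m_f t}$, which is a slow growth. Then
\[
\|u\|_{L^\infty}\lesssim \|\om\|_{L^1}^{1/2}\|\om\|_{L^\infty}^{1/2}
\]
in a finite-width strip, using the exponential tails of the kernel. This is too crude. A better route is a localized $L^\infty$ bound for $u$, which needs only local $L^\infty$ and exponentially weighted $L^1$ of $\om$, both controlled. For the bootstrap it suffices that the small constant $C_*$ beats the growth, i.e. rates $CC_*m_f<\tfrac14\lambda m_f$.

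Once the bootstrap closes, the weighted bound $|\om(x,y,t)|\le W(0)e^{\lambda x}e^{-\frac12 m_f t}$ gives the conclusion \eqref{eq:thm:shear2 conclusion} on $x\le R$, with implied constant $e^{\lambda R}W(0)$. Here $W(0)$ is finite because $\om_{in}$ has compact support.
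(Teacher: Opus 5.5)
\paragraph{Verdict: the proposal has a genuine gap.} Your Steps 1--2 match the paper's own argument: a weight $e^{-\lambda x}$ with $\lambda<\pi$ (the paper takes $\lambda=\pi/2$), a maximum principle along characteristics of $v=(f(y),0)+u$, and the kernel bound $\sup e^{-\lambda x}|u|\le C_\lambda\sup e^{-\lambda x}|\om|$. But the argument does not close, because you never get a \emph{time-uniform} bound on $\|u(t)\|_{L^\infty}$, and your bootstrap needs one for all $t\ge0$.

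\paragraph{Why your bounds do not close the bootstrap.} Every bound you propose grows in time. Your Duhamel bound $\|\om(t)\|_{L^\infty}\le\|\om_{in}\|_{L^\infty}+C_*m_f\int_0^t\|u\|_{L^\infty}\,ds$, combined with $\|u\|_{L^\infty}\lesssim\|\om\|_{L^\infty}$, only gives $\|\om(t)\|_{L^\infty}\lesssim C_*m_f e^{CC_*m_f t}$. Your $L^1$ Gronwall bound grows the same way. Such a bound stops guaranteeing $\|u\|_{L^\infty}\le m_f/4$ after a time of order $\log(1/C_*)/(C_*m_f)$, however small $C_*$ is. After that the lower bound $\dot X^x\ge\frac34 m_f$ is lost and the weighted decay collapses. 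The rate comparison $CC_*m_f<\frac14\lambda m_f$ does not rescue this. As you note yourself, the left-weighted norm gives no information about the vorticity on the far right, and that vorticity is what controls $\|u\|_{L^\infty}$ at late times. A growing quantity is not offset by the decay of a norm that does not see it.

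\paragraph{The missing idea.} The forcing is an exact time derivative along trajectories. Since the background shear is horizontal, $\dot Y=u^y(X,t)$, hence $f''(Y)\,u^y(X,t)=\frac{d}{dt}f'(Y(t))$. This is just the statement that the total vorticity is transported. Integrating, and using only that the channel has width $1$,
\[
|\om(X(t),t)|\le|\om_{in}(X(0))|+|f'(Y(t))-f'(Y(0))|\le\|\om_{in}\|_{L^\infty}+\|f''\|_{L^\infty}.
\]
Under the hypotheses this gives $\|\om(t)\|_{L^\infty}\le 2C_*m_f$ and $\|u(t)\|_{L^\infty}\lesssim C_*m_f\le m_f/4$ for all $t$, with no bootstrap at all. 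This is the paper's Lemma~\ref{lemma:apriori est}. Your crude bound discards exactly this cancellation by replacing $\int_0^t f''u^y\,ds$ with $C_*m_f\int_0^t\|u\|_{L^\infty}\,ds$. With the uniform bound in hand, your weighted maximum principle closes as you describe.

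\paragraph{A secondary point.} Running the maximum principle also requires $\sup e^{-\lambda x}|\om(t)|<\infty$ for $t>0$, not only at $t=0$. The paper obtains this from a Duhamel argument that propagates exponential localization.
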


 \begin{remark}

The condition $\inf f > 0$ is exactly the rigidity criterion appearing in recent stationary classification results~\cite{HamelNadir17,HamelNadir19}. This even allows shear flows to be non-monotone.

\end{remark}

\begin{remark}
We note that in both Theorem  \ref{thm:shear} and Theorem \ref{thm:shear2 f>c}:

\begin{itemize}
    \item As there is no restriction on its support size, the perturbation can be arbitrarily large in all Sobolev spaces $W^{k,p}$ for $p<\infty$.

     \item The compact support assumption is necessary to rule out nearby periodic~\cite{MR2796139,MR4595614} or quasi-periodic~\cite{MR4796775} stationary structures\footnote{Without the compact support assumption, the  simplest counterexample is a nearby shear flow.} that do not decay. 

    \item The initial perturbation is allowed to touch the boundary $y=0$, and it will remain in contact for all times, cf.~\cite{MR4076093}. 
\end{itemize}

Using the last point above, one can devise linear-in-time filamentation for a large class of initial data and affine shears $(by+c,0)$: there are open sets of initial data such that for any $\alpha>0$ there holds
\begin{equation}
\|\om (t) \|_{C^\alpha} \gtrsim t^{\alpha} \quad \text{for all $t\geq 0$}.
\end{equation}

\end{remark}

\subsection{Loss of compactness}
Our weak convergence results relate to \v{S}ver\'ak's generic non-compactness conjecture~\cite{Snotes} and the earlier formulation of ``wandering'' behavior proposed by Nadirashvili~\cite{Nadir91}. While establishing this conjecture in full generality remains open, non-compactness has been demonstrated in the perturbative regime~\cite{BedrossianMasmoudi15} as a consequence of phase mixing.

While the conjecture was originally intended for the bounded domain, our asymptotic stability in the infinite channel also manifests a loss of compactness. However, in the present setting, the mechanism differs significantly. Unlike the periodic case, where weak convergence stems from infinite-time phase mixing, here it arises from the macroscopic transport of bulk vorticity to spatial infinity. This vorticity escape implies the non-compactness of the orbit $t \mapsto \omega(t)$ for the weakly convergent Yudovich solutions established in Theorem \ref{thm:shear} and Theorem \ref{thm:shear2 f>c}.

\begin{theorem}\label{thm:noncompact}
Let  $\om : \Omega \times [0,\infty) \to \R$ be a Yudovich solution to \eqref{eq:eu_eq} with some background shear flow $f \in C^2$ and compactly supported initial data. Denote by $u $  its associated velocity field.

If  $\omega(t) \rightharpoonup 0$ weakly in $L^p(\Omega)$ for every $1 < p < \infty$  as $t\to \infty $,  then depending on the shear profile $f(y)$  the following  loss of relative compactness statements hold. 
\begin{enumerate}
    \item For Couette flows $(by+c, 0)$:
    \begin{itemize}
\item The trajectory $t \mapsto \omega(t)$ is not relatively compact in $L^p(\Omega)$ for any $1\le p \le \infty $.

\item If the initial excess energy \eqref{eq:def_energy} is non-zero, then $t \mapsto u(t)$ is not relatively compact in $L^1(\Omega)$.
\end{itemize}

\item For general shear flows:
\begin{itemize}
\item If the initial excess enstrophy \eqref{eq:def_enstrophy} is non-zero, then the trajectory $t \mapsto \omega(t)$ is not relatively compact in $L^1(\Omega)$.

\item If the initial excess energy \eqref{eq:def_energy} is non-zero, then neither $t \mapsto \omega(t)$ nor $t \mapsto u(t)$ is relatively compact in $L^1(\Omega)$.
\end{itemize}

\end{enumerate}

\end{theorem}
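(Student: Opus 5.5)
The argument rests on one principle: weak convergence to zero together with relative compactness in a norm topology forces strong convergence to zero along sequences. Concretely, suppose the trajectory were relatively compact in some $L^p$ or $L^1$ space. Given any $t_n\to\infty$, we extract a subsequence converging strongly to some limit $g$. The weak limit in $L^p$, $1<p<\infty$, is $0$. For $L^1$ and $L^\infty$ we test against $C_c^\infty$ functions, which is legitimate because $\omega(t)$ is bounded in $L^1\cap L^\infty$. Either way $g=0$. Hence $\omega(t_n)\to0$ strongly in that norm, and it remains to contradict this using a quantity that is conserved or monotone. Compactness in $L^1$ with uniformly bounded $L^\infty$ norm also gives strong $L^p$ convergence by interpolation, so the various spaces can be handled together.

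For Couette flows $f=by+c$ we have $f''=0$. The equation is then pure transport by the divergence-free, tangential field $(f,0)+u$. So every $\|\omega(t)\|_{L^p}$ is conserved, $1\le p\le\infty$, and the distribution function of $\omega$ is conserved as well. If $\omega_{in}\neq0$, strong convergence to zero in any $L^p$ contradicts conservation of $\|\omega\|_{L^p}$. For $p=\infty$ the same holds, and in fact compactness in $L^\infty$ already implies compactness in $L^2_{loc}$-type norms via the conserved $L^1$ bound. The case $\omega_{in}=0$ is trivial and has to be excluded, presumably by assumption. For the velocity, a shear-independent conserved quantity like the excess energy \eqref{eq:def_energy} is needed. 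For Couette flow the total energy $\int|U+u|^2$ minus the background reduces to $\frac12\|u\|_{L^2}^2+\int f u^x$, and $\int u^x=-\int\partial_y\psi$ involves only boundary values of the stream function. I expect \eqref{eq:def_energy} to be exactly this conserved combination. If $u(t_n)\to0$ in $L^1$, we upgrade to $L^2$ convergence using a uniform bound on $u$: $\|u\|_{L^\infty}\lesssim\|\omega\|_{L^1\cap L^\infty}$ is uniformly bounded by conservation. The excess energy then tends to zero, contradicting its conservation unless it vanishes.

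For general shear flows $\omega$ alone is no longer transported, but the total vorticity $W=f'+\omega$ is transported by the full velocity $(f,0)+u$. Therefore $\int\Phi(W)\,dx$ is conserved in the appropriate renormalized sense, and in particular so is the excess enstrophy $\int\big(|f'+\omega|^2-|f'|^2\big)=\int(\omega^2+2f'\omega)$, which I take to be \eqref{eq:def_enstrophy}. We need $\int f'\omega$ to be finite, which holds because $\omega\in L^1$ by the footnoted preservation result of Section \ref{sec:shear}. If $\omega(t_n)\to0$ in $L^1$, the uniform $L^\infty$ bound gives $\|\omega(t_n)\|_{L^2}\to0$, and $\int f'\omega(t_n)\to0$ since $f'$ is bounded. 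So the excess enstrophy tends to $0$, a contradiction. The energy statement is handled analogously with the excess energy $\int\big(\frac12|u|^2+f u^x\big)$. From $u(t_n)\to0$ in $L^1$ we get convergence of both terms, since $f$ is bounded and $u$ is bounded in $L^\infty$. From $\omega(t_n)\to0$ in $L^1$ we get $u(t_n)\to0$ in $L^2$, either through an $L^1\cap L^\infty\to L^2$ Biot–Savart estimate in the channel or by pairing $\int|u|^2=\int\psi\,\omega$ with the uniform bound on $\psi$ modulo boundary constants. The term $\int f u^x$ also converges, by weak convergence or by the same bound.

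I expect the main obstacle to be the precise handling of the excess energy. The quantity must actually be conserved in the unbounded channel with a non-decaying background, which requires justifying the integrations by parts and the boundary terms of $\psi$ on $y=0,1$. We must also show that $u$ is square integrable and $\int f u^x$ finite for $L^1\cap L^\infty$ vorticity. Moreover, the implication from $L^1$ convergence of $\omega$ to $L^2$ convergence of $u$ needs care, since the decay of $\psi$ at $x\to\pm\infty$ depends on the channel Green function. I would first rely on its exponential decay, combined with the uniform bounds from Section \ref{sec:shear}.
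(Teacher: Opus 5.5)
Your proposal is correct and is essentially the paper's proof. Relative compactness together with the weak convergence forces $\omega(t_n)\to0$ (or $u(t_n)\to0$) strongly along a subsequence, the uniform $L^\infty$ bounds of Lemma \ref{lemma:apriori est} upgrade $L^1$ to $L^2$ convergence, and this contradicts conservation of $\|\omega\|_{L^p}$ for Couette flow or of the excess enstrophy or energy in general, with $\omega(t_n)\to0$ in $L^1$ passed to $u$ via the Biot--Savart bound \eqref{eq:lemma:apriori est 3} at $a=0$. Your remark that $\omega_{in}\neq0$ must be assumed in the Couette case is a valid point that the paper leaves implicit.
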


\begin{remark} \hfill 
\begin{itemize} 
\item Although phenomena of vorticity escape are characteristic of  Euler flows in unbounded domains~\cite{MR3117517,AbeChoi22,MR4350517}, this work appears to be the first to leverage this mechanism to establish weak convergence to zero (asymptotic stability).

\item The loss of the excess kinetic energy in infinite time also manifests as the irreversibility of the 2D Euler dynamics. This behavior stands in sharp contrast to inviscid damping in the periodic setting, where the velocity converges strongly in $L^p$. 

\item The assumption of non-zero excess energy or enstrophy represents the ``generic'' case. The latter condition holds automatically for all monotone shears and can be further relaxed to non-zero excess Casimirs.

\end{itemize}
\end{remark}

Theorem \ref{thm:noncompact} follows directly from weak convergence in the presence of conserved quantities---specifically, the excess energy and excess enstrophy. These are the natural analogues of the standard conserved quantities adapted to the infinite channel setting with a non-decaying background shear. See Section \ref{subsec:compactness} for details.

\subsection{Comparison of stability results}

Before coming to the proof, we briefly discuss our result in the context of hydrodynamic stability and long-time dynamics for the 2D Euler equations. As the stability of ideal fluids is a vast subject, we will refer to the surveys \cite{DrivasElgindi23,KMS23} for more related references.

\begin{table}[h!]
\centering
\small
\renewcommand{\arraystretch}{1.6}
\setlength{\tabcolsep}{5pt}
\begin{tabular}{@{} l l l l @{}}
\toprule
\textbf{Setting} & \textbf{Mechanism} & \textbf{Perturbation} & \textbf{Reference} \\
\midrule

\textbf{Gevrey Shear \& $\T \times \R$ or $\T \times [0,1]$} & Phase Mixing &   Gevrey &  \cite{BedrossianMasmoudi15,MR4628607,MR4740211} \\[0.5em]

\textbf{Point Vortex \& $\R^2$} & Phase Mixing &   Gevrey &  \cite{MR4400903}   \\[0.5em]

\textbf{0-homogeneous \& m-fold} &  Spiraling & Not applicable &   \cite{ElgindiMurraySaid25}  \\[0.5em]

\midrule
\rowcolor{blue!10} 
\textbf{$C^2$ Shear \& $   \R\times [0,1]$} & \textbf{Advection} &    $L^\infty$ & This paper   \\

\bottomrule
\end{tabular}
\caption{The landscape of nonlinear stability/relaxation for the 2D Euler equations.}
\label{tab:euler_map}
\end{table}

\subsubsection{The asymmetry and challenge}\label{subsub:growth}
Establishing asymptotic stability for the 2D Euler equations has proven significantly more challenging than proving instability. This difficulty stems from a fundamental physical asymmetry: while ideal fluids lack explicit stabilizing mechanisms like viscous dissipation, they possess robust mechanisms for growth, such as linear stretching \cite{Koch02}, hyperbolic saddle~\cite{MR1288809}, and boundary-driven growth~\cite{Nadir91,KS14}. A significant manifestation of this is the phenomenon of norm inflation in borderline or supercritical spaces ~\cite{BourgainLi14,MR3625192,MR4776418} since the pioneering work of Bourgain and Li \cite{BourgainLi13}. For the long-time dynamics, the robust growth mechanisms result in flourishing development in the literature regarding infinite-time instability and loss of regularity \cite{Koch02,MorgulisShnirelmanYudovich08,Denisov09,KS14,DrivasElgindiJeong24,Z24,MR4350517,jeongyaozhou2025,alazardsaid2026}.

In stark contrast, even in the linearized setting, proving infinite-time convergence remains highly non-trivial \cite{MR3772399,MR3987441}. Table \ref{tab:euler_map} summarizes this sparse landscape of rigorous nonlinear convergence results. Because the equations lack explicit dissipation to damp out perturbations, any asymptotic stabilization must emerge purely from the kinematics of the nonlinear flow itself. For over a century~\cite{Kelvin1887,Rayleigh1879,Orr1907}, the sole kinematic mechanism known to achieve this was inviscid damping via phase mixing.

\subsubsection{The phase mixing paradigm}

Prior to this work, the predominant mechanism of establishing nonlinear asymptotic stability for shear flows has been phase mixing. This paradigm was pioneered by Bedrossian and Masmoudi ~\cite{BedrossianMasmoudi15} for the Couette flow in $\T\times \R$ and extended to a class of  monotone shear flows ~\cite{MR4628607,MR4740211} in $\T\times [0,1]$. The phase mixing mechanism relies on not only    the periodic setting, but also Gevrey regularity of the initial perturbation. This high-regularity requirement also extends to the asymptotic stabilization of point vortices in $\mathbb{R}^2$~\cite{MR4400903}, which similarly leverages phase mixing in the angular coordinate.

Beyond phase mixing, rigorously identified relaxation mechanisms remain rare and geometrically constrained. Recently, the authors in \cite{ElgindiMurraySaid25} demonstrated relaxation for a class of scale-invariant solutions in $\R^2$, bypassing the need for a background flow. While this represents the first non-perturbative relaxation/convergence result, the admissible class of initial data is highly specialized: $0$-homogeneous and $m$-fold symmetric with $m \ge 4$.

\subsubsection{Orbital vs. asymptotic stability}
 
Finally, from a variational perspective, we also note the orbital stability results \cite{WanPulvirenti85,MR3117517,AbeChoi22} established in the spirit of Arnold~\cite{MR1612569}. While these frameworks successfully identify stable stationary structures and establish orbital stability in the form of $L^p$ boundedness of the vorticity, such frameworks have not yielded asymptotic convergence to date.

\subsection{Methodology and outline}
 
Our method introduces a distinct stabilizing paradigm: tracking the macroscopic escape of vorticity ``downstream''. We develop two Eulerian approaches below based on this advection mechanism and transport estimates, allowing us to completely bypass the Gevrey regularity barrier and close the stability estimates at the minimal Yudovich ($L^\infty$) regularity.

While this mechanism is conceptually intuitive, the primary analytical challenge is controlling the linear stretching term $-f''(y)u^y$ and the back-flow near the boundary. Even as the background shear transports fluid downstream, the linear stretching can, in principle, generate exponential growth of the perturbation. This destabilizing threat is most severe at the channel boundaries where the background velocity can degenerate.

To overcome these challenges, the remainder of the paper is organized as follows:

\begin{itemize}
    \item Section \ref{sec:shear}: We establish the general framework for Yudovich solutions, notably velocity estimates and propagation of exponential integrability.  As an immediate application, we prove Theorem \ref{thm:shear2 f>c} by deriving exponential decay of the perturbation for non-stagnant shear flows. Using a \textit{weighted vorticity} of the form $
\Phi(x,y,t) = \omega(x,y,t) e^{-a(x - ct)}$, the exponential decay of the local vorticity follows from a maximum principle for $\Phi$, under the   conditions  $f\ge c >0$ and $|f''| \lesssim f$.

\item Section \ref{sec:renorm}: We introduce the renormalization framework and analyze the system's conserved quantities. First, we adapt the DiPerna-Lions renormalization framework to our specific setting, which lays the crucial groundwork for the energy arguments used later in the paper. Second, we leverage this framework—alongside the natural conserved quantities of our system—to rigorously prove the loss of compactness results stated in Theorem \ref{thm:noncompact}.

\item Section \ref{sec:degenerate}: We prove Theorem \ref{thm:shear}, addressing the critical case where the background velocity vanishes at the boundary.  Since the pointwise maximum principle fails when dealing with this degeneracy alongside rough $(L^{\infty})$ perturbations, we employ a \textit{renormalized energy method} using the framework established in Section \ref{sec:renorm}.  We construct a weighted $L^1$ mass functional to  track the bulk mass of the vorticity. The key technical innovation is that the nonlinear estimate for the $L^1$ weighted mass resembles a double interaction energy integral, and thus enjoys favorable symmetry properties---a feature absent in any other $L^p$ energy functionals ($p > 1$). This crucial symmetry, combined with a careful analysis of the Biot-Savart kernel, proves that the background shear is strong enough to overcome the induced back-flow and ``strip'' the boundary layer.

\item Section \ref{sec:discuss}: We discuss further implications of our stability results in the broader context of   hydrodynamic stability and conclude with some open problems.
\end{itemize}

\subsection*{Acknowledgment}

We thank Jiajun Tong for his critical comments on an earlier version of the manuscript. XL is supported by NSFC No. 12421001 and  No. 12288201.

\section{Yudovich solutions on the infinite channel}\label{sec:shear}

In this section, we set up the weak solution framework of Yudovich in the infinite channel $\Om = \R \times [0,1]$. Due to the two parallel boundaries $y=0$ and $y=1$, the Biot-Savart kernel decays exponentially fast at infinity.

As an application of this exponential screening effect, we prove local-in-space exponential decay for non-stagnant shear flows in Lemma \ref{lemma:max_principle} below, thereby concluding Theorem \ref{thm:shear2 f>c}. The proof of Theorem \ref{thm:shear} will be given in the next section.

\subsection{The Green function and Biot-Savart law}

The key to controlling the non-local effect is the exponential decay of the Green function in the infinite channel.

Throughout the paper, we denote  $z =(x,y)$ for a point in the channel with $x \in \R$, $y \in [0,1]$ being the two components. Consider the Dirichlet Green function on $\Omega = \R \times [0,1]$:
\begin{equation}\label{eq:Green}
\begin{cases}
\Delta G(z,z') = \delta(z-z') &\\
G(z,z') = 0 \quad z,z' \in \p\Omega .&  
\end{cases}
\end{equation} 
 The Dirichlet boundaries ($y=0,1$) induce a spectral gap that forces the Green's function to screen interactions exponentially. Indeed, the solution to \eqref{eq:Green} is  given explicitly by
\begin{equation}
 G(x,y, x',y') = -\frac{1}{4\pi} \ln \left( \frac{\cosh\left[{\pi}(x-x')\right] - \cos\left[{\pi}(y+y')\right]}{\cosh\left[{\pi}(x-x')\right] - \cos\left[{\pi}(y-y')\right]} \right) .
\end{equation}

The velocity $u : \Om \to \R^2$ can be recovered by the vorticity via the Biot-Savart law
\begin{equation}
    u(z):=\nabla^{\perp} \Delta^{-1}\om = \int_{\Omega } K(z,z')\om(z')\,dz'.
\end{equation}
Note that our convention of the Biot-Savart law is consistent with the counter-clockwise rotation of a positive vortex.

The following lemma demonstrates the exponential screening effect in the infinite channel. We omit the standard proof.
\begin{lemma}\label{lemma:kernal K}
    The kernel $K =(K^x, K^y) : \Omega \times \Om \to \R^2$ satisfies
    \begin{equation}\label{eq bound K}
        |K(z,z')| \lesssim 
    \begin{cases}
  \frac{1}{|z-z'|}  &\quad\text{if $|z-z'|\le 1 $  } \\
  e^{-{\pi}|x-x'|} &\quad\text{if $|z-z'| \ge 1 $.  } 
\end{cases}
    \end{equation}
    Moreover, 
    for any $\om \in L^\infty(\Om)$,
    \begin{equation}\label{eq: lemma:kernal K}
        |K*\om ( z) -  K*\om ( z')| \lesssim  \| \om\|_{L^\infty} h(|z-z'|)
    \end{equation}
    with the usual log-Lipschitz modulus $ h(r) = r (1 - \ln r)$ for $0< r\leq 1$ and $h(r) =1 $ when $r \geq 1$.
\end{lemma}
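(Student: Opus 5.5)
We plan to work directly from the explicit formula for $G$. With $X = \pi(x-x')$, $a_\pm = \pi(y\pm y')$, write
\[
G = -\frac{1}{4\pi}\Big[\ln\big(\cosh X - \cos a_+\big) - \ln\big(\cosh X - \cos a_-\big)\Big].
\]
Then $K = \nabla^\perp_z G$ is a difference of two terms of the form $\nabla(\cosh X - \cos a_\pm)/(\cosh X - \cos a_\pm)$. The components are
\[
\frac{\pi\sinh X}{\cosh X-\cos a_\pm}\quad\text{and}\quad \frac{\pi\sin a_\pm}{\cosh X-\cos a_\pm}.
\]

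\textbf{Far field: $|z-z'|\ge 1$.} First suppose $|x-x'|\ge 1$, so $|X|\ge \pi$ and $\cosh X - \cos a_\pm \ge \cosh X - 1 \gtrsim e^{|X|}$. Here we do not bound each term separately, since $\sinh X/\cosh X$ does not decay. Instead we take the difference. The $x$-type component is
\[
\frac{\pi\sinh X\,(\cos a_+ - \cos a_-)}{(\cosh X-\cos a_+)(\cosh X-\cos a_-)},
\]
which is $\lesssim e^{|X|}/e^{2|X|} = e^{-\pi|x-x'|}$. The $\sin a_\pm$ components are each bounded by $e^{-|X|}$.

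If instead $|x-x'|<1$ but $|z-z'|\ge 1$, then $|y-y'|$ is close to $1$, so $|x-x'|$ is bounded. It therefore suffices to show $|K|\lesssim 1$, because $e^{-\pi|x-x'|}\gtrsim 1$ there. The key point is to check that the denominators stay away from zero. We have $\cosh X - \cos a_- \ge 1 - \cos a_- \gtrsim |a_-|^2$ with $|a_-|$ bounded below. The $a_+$ term is singular only near the image point, where $y+y'\in\{0,2\}$, i.e. both points lie near the same wall. But then $|y-y'|$ is small, so this cannot occur when $|z-z'|\ge1$ and $|x-x'|<1$. Apart from an elementary case split with explicit constants, this settles the far-field bound.

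\textbf{Near field: $|z-z'|\le 1$.} Here we use $\cosh X - \cos a \approx \tfrac12(X^2+a^2)$ for bounded $X,a$, with $a$ taken modulo $2\pi$. The $a_-$ term gives $|K|\lesssim 1/|z-z'|$. The $a_+$ term is bounded by $1/|z-\bar z'|$, where $\bar z'$ is the reflection of $z'$ across the nearer wall, and $|z-\bar z'|\ge |z-z'|$. Together these give \eqref{eq bound K}.

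\textbf{Log-Lipschitz estimate \eqref{eq: lemma:kernal K}.} For $r=|z-z'|\ge1$, \eqref{eq bound K} gives $\sup_z\int|K(z,\cdot)|\lesssim 1$, since the kernel is locally integrable and exponentially decaying in $x$. Hence $|K*\om|\lesssim\|\om\|_{L^\infty}$ and the bound holds with $h=1$.

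For $r<1$ we run the classical Yudovich argument on $\Om$. We also need the gradient bound $|\nabla_z K(z,w)|\lesssim |z-w|^{-2}$ for $|z-w|\le1$ and $\lesssim e^{-\pi|x-w_x|}$ otherwise; this follows by differentiating the same explicit expressions once more, with the reflected term controlled as before. Split the integral into the region $B=\{|w-z|\le 2r\}$ and its complement. On $B$, bound each kernel separately by $\int_{|w-z|\le 3r}|w-z|^{-1}\,dw\lesssim r$. Off $B$, use the mean value theorem along the segment from $z$ to $z'$, which lies in the convex domain $\Om$; on this segment $|\xi-w|\ge |z-w|/2$. This gives
\[
r\int_{2r\le|w-z|}|\nabla K(\xi,w)|\,dw \lesssim r\Big(\int_{2r}^{1}\rho^{-1}\,d\rho + 1\Big) \lesssim r(1-\ln r).
\]

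The main obstacle is the boundary. The reflected singularity must be handled uniformly when $z$ and $z'$ approach a wall, and the far-field cancellation between the $a_+$ and $a_-$ terms must be used rather than bounding each term separately. Everything else is standard.
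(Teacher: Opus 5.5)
The paper omits this proof as standard. Your argument is that standard proof, and it is correct: you derive explicit bounds from the closed-form Green's function, using the $a_+$/$a_-$ cancellation for $\partial_x G$ when $|x-x'|\ge 1$ and the reflection inequality $|z-\bar z'|\ge|z-z'|$ near the walls, which is consistent with the formulas for $K^x_1$, $K^x_2$, $K^y$ used in Section~\ref{sec:degenerate}. You then apply the classical Yudovich splitting at scale $2r$ to get the log-Lipschitz modulus.

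One justification needs repair. When $|x-x'|<1\le|z-z'|$, it is false that $|y-y'|$ is close to $1$, that $|a_-|$ is bounded below, or that the two points cannot sit near the same wall; take $y=0$, $y'=0.02$, $|x-x'|=0.9999$. The bound $|K|\lesssim 1$ still holds, because $\cosh X-\cos a_\pm\ge\tfrac12X^2+\tfrac{2}{\pi^2}\,\mathrm{dist}(a_\pm,2\pi\mathbb{Z})^2\ge 2|z-z'|^2\ge 2$ while the numerators are bounded. This is just your near-field estimate applied in this case.
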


\subsection{The Yudovich theory in the infinite channel}

The existence and uniqueness of Yudovich solutions in the infinite channel $\Omega$ follow from standard arguments.

We note that the exponential decay of the Biot-Savart kernel ensures the velocity is well-defined assuming only bounded initial data, and hence the $L^1$ condition in the usual Yudovich  theory can be dropped.
 
\begin{proposition}\label{prop:Yudovich existence}
Let $f\in C^2([0,1])$. For any $\om_{in} \in L^\infty(\Om)$, there exists a unique Yudovich solution $\om \in C_w([0, +\infty ); L^\infty (\Om))$ to \eqref{eq:eu_eq} whose velocity field is log-Lipschitz as in \eqref{eq: lemma:kernal K}.

If in addition $\om_{in} \in L^p(\Om)$ for some $1\leq p < \infty$, then $\om \in C_w([0,+\infty ); L^p (\Om))$ as well.
\end{proposition}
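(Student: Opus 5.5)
The plan is to run the classical Yudovich scheme in Lagrangian form, with the total velocity $U = (f(y),0) + u$ and $u = K*\om$. I would reformulate \eqref{eq:eu_eq} as
\begin{equation}
\p_t \om + U\cdot\nabla \om = -f''(y)\, u^y ,
\end{equation}
which is a transport equation with a linear source. The first step is a priori control. Lemma \ref{lemma:kernal K} gives $\|u\|_{L^\infty} \lesssim \|\om\|_{L^\infty}$, since the kernel $K$ is locally integrable and decays exponentially in $|x-x'|$; this is the point where no $L^1$ assumption is needed. Integrating along characteristics then gives
\begin{equation}
\|\om(t)\|_{L^\infty} \le \|\om_{in}\|_{L^\infty} + \|f''\|_{L^\infty}\int_0^t C\|\om(s)\|_{L^\infty}\,ds ,
\end{equation}
so Gronwall yields $\|\om(t)\|_{L^\infty}\le \|\om_{in}\|_{L^\infty}e^{Ct\|f''\|_\infty}$. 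This bound is finite for every $t$, which is all global existence requires.

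The second step is the regularity of the flow map. The shear part $(f(y),0)$ is Lipschitz because $f\in C^2$, and $u$ is log-Lipschitz with constant $\lesssim\|\om\|_{L^\infty}$ by \eqref{eq: lemma:kernal K}. Hence $U$ is log-Lipschitz, and Osgood's lemma gives a unique continuous, measure-preserving flow $X_t$. Since $U$ is tangent to $\p\Om$ ($u^y=0$ there by the Dirichlet Green function), the flow preserves $\Om$. The solution is then given by Duhamel along trajectories:
\begin{equation}
\om(t,X_t(z)) = \om_{in}(z) - \int_0^t (f'' u^y)(s,X_s(z))\,ds .
\end{equation}

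For existence, I would mollify $\om_{in}$ and truncate it to a compact set, solve the smooth problem (classical for smooth data on the channel), and use the uniform $L^\infty$ and log-Lipschitz bounds. Equicontinuity of the flows, together with weak-$*$ compactness in $L^\infty$, gives a limit. I would pass to the limit in the weak formulation, using that $u^n\to u$ locally uniformly: the kernel tail is exponentially small, so local convergence of $\om^n$ in the weak-$*$ sense suffices to control the nonlocal term. Weak continuity in time follows from the equation, since $\p_t\om$ is bounded in a negative Sobolev space locally.

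For uniqueness, I would follow Yudovich's energy argument but localized, because the difference of velocities need not lie in $L^2(\Om)$ when $\om\notin L^1$. I expect this to be the main obstacle. My first attempt would use the Lagrangian approach of Marchioro--Pulvirenti: compare the two flows via
\begin{equation}
\rho(t) = \int_\Om |X_t^1(z)-X_t^2(z)|\,e^{-|x|}\,dz ,
\end{equation}
and use the exponentially decaying kernel to bound the velocity difference by a weighted $L^1$ distance. The linear source $f''u^y$ then enters as a Lipschitz perturbation, controlled by the same weighted velocity difference. This produces an Osgood inequality $\rho'\lesssim h(\rho)+\rho$, which forces $\rho\equiv 0$.

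The $L^p$ claim comes from the same Duhamel formula. The map $X_t$ preserves measure, and $\|u\|_{L^p}\lesssim\|\om\|_{L^p}$ by Young's inequality, since $K(z,\cdot)$ is uniformly in $L^1$. Gronwall then gives $\|\om(t)\|_{L^p}\le \|\om_{in}\|_{L^p}e^{Ct}$, and weak continuity in $L^p$ follows from boundedness together with the weak-$*$ continuity already established.
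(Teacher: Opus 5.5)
The paper gives no proof here beyond saying that the standard Yudovich argument goes through, because the exponential decay of $K$ removes the need for $\om\in L^1$. Your outline is that standard argument: an a priori $L^\infty$ bound, an Osgood flow, compactness for existence, and a weighted Lagrangian comparison for uniqueness. The weight $e^{-|x|}$ in $\rho$ is a reasonable way to handle non-integrable vorticity.

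There is, however, a real gap in the uniqueness step at the stated regularity $f\in C^2$. You say the source $f''u^y$ enters as a Lipschitz perturbation controlled by the velocity difference, but $f''$ is only continuous. Compare the Duhamel terms along the two flows:
\[
\int_0^t\Big[f''(Y^1_s)\,u^{1,y}(s,X^1_s)-f''(Y^2_s)\,u^{2,y}(s,X^2_s)\Big]\,ds ,
\]
where $Y^i$ is the vertical component of $X^i$. Besides a velocity-difference term, this leaves the piece $\int_0^t[f''(Y^1_s)-f''(Y^2_s)]\,u^{2,y}(s,X^2_s)\,ds$. That piece is bounded only through the modulus of continuity $\mu$ of $f''$. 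Your differential inequality therefore picks up an extra term like $\int_0^t\mu(\rho(s))\,ds$. For a non-Osgood modulus such as $\mu(r)=r^{1/2}$, this no longer forces $\rho\equiv 0$.

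The missing observation is that this Duhamel integral is exact. The shear has no vertical component, so $\dot Y_s=u^y(s,X_s)$ along every trajectory. Hence, writing $z=(x,y)$,
\[
\int_0^t f''(Y_s)\,u^y(s,X_s)\,ds=f'(Y_t)-f'(y),\qquad \om(t,X_t(z))=\om_{in}(z)+f'(y)-f'(Y_t(z)).
\]
This is just the transport of total vorticity invoked in the proof of Lemma \ref{lemma:apriori est}. For two solutions with the same data, the Lagrangian vorticity difference is therefore $f'(Y^2_t(z))-f'(Y^1_t(z))$, which is bounded by $\|f''\|_{L^\infty}|X^1_t(z)-X^2_t(z)|$.

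Insert this into $\int K(X^2_t(z),X^2_t(\zeta))[\,\cdots]\,d\zeta$ and integrate against $e^{-|x|}\,dz$. Using that $K$ decays at rate $\pi>1$ and that displacements are $O(t)$, the term contributes $\lesssim_T \rho$. Your inequality $\rho'\lesssim_T h(\rho)+\rho$ then does hold. The same identity also gives the time-uniform bound of Lemma \ref{lemma:apriori est} in place of your exponential one.

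A smaller point concerns the $L^p$ part. An $L^p$ bound plus weak-$*$ continuity in $L^\infty$ gives weak continuity only for $1<p<\infty$, by density of $L^1\cap L^{p'}$ in $L^{p'}$. For $p=1$ you must also show that $\{\om(t)\}_{t\le T}$ is tight, for example from finite propagation speed and the exponential decay of $K$ in the source term. Also, the Gronwall bound for $\|\om(t)\|_{L^p}$ should be derived on the approximations, where finiteness is known, and then passed to the limit.
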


Next, we derive useful estimates for the Yudovich solutions in the channel $\Om$. Thanks to the exponential decay of the Biot-Savart kernels, both the vorticity and the velocity field stay exponentially localized throughout the evolution.

From now on, we will assume the compact support of the initial data, though all results in this paper also hold for perturbations that are exponentially localized.

\begin{lemma}\label{lemma:apriori est}
Let $f \in  C^2([0,1])$ and $\om$  be a Yudovich solution to \eqref{eq:eu_eq} with compactly supported initial data $\om_{ in} \in L^\infty(\Om)$. Then the following estimates hold.
    \begin{itemize}
    \item For any $t \ge 0$, there hold 
    \begin{equation}\label{eq:lemma:apriori est 1}
       \| \om (t)\|_{L^{\infty}} \le    \|f''\|_{L^{\infty}}+ \| \om_{in}\|_{L^{\infty}} ,
   \end{equation}
 and
    \begin{equation}\label{eq:lemma:apriori est 2}
        \|u(t)\|_{L^{\infty}  } \lesssim   \|\om (t)\|_{L^{\infty} }.
    \end{equation}

        \item For any $0\leq |a| < {\pi}  $,  $e^{ - a x } \om   , e^{  - a x } u \in   L^1 \cap   L^\infty  $  and
        \begin{equation}\label{eq:lemma:apriori est 3}
            \| e^{   - a x } u\|_{L^p} \lesssim \big( {\pi} - |a| \big)^{-1} \|e^{   - a x} \om \|_{L^p} .
        \end{equation}

    \end{itemize}
\end{lemma}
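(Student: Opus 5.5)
The plan is to argue through the Lagrangian formulation of the Yudovich solution and exploit a conserved quantity: the total vorticity of the full flow. Write the full velocity as $U = (f(y)+u^x, u^y)$. By Proposition \ref{prop:Yudovich existence} and Lemma \ref{lemma:kernal K}, $U$ is bounded and log-Lipschitz, so it generates a unique measure-preserving flow map $X(t,z) = (X^x, X^y)$; the no-penetration condition keeps $X^y \in [0,1]$.

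\textbf{The $L^\infty$ bound \eqref{eq:lemma:apriori est 1}.} Set $W := \om + f'(y)$. Since $\p_t f' = 0$, $f\p_x f' = 0$ and $u\cdot\nabla f' = f''u^y$, equation \eqref{eq:eu_eq} becomes the pure transport equation
\[
\p_t W + U\cdot\nabla W = 0 .
\]
Hence $W(t, X(t,z)) = W_{in}(z)$, which gives
\[
\om(t,X(t,z)) = \om_{in}(z) + f'(y) - f'(X^y(t,z)) .
\]
Since both $y$ and $X^y$ lie in $[0,1]$, the mean value theorem gives $|f'(y)-f'(X^y)| \le \|f''\|_{L^\infty}$, and \eqref{eq:lemma:apriori est 1} follows. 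To make this rigorous at $L^\infty$ regularity, I would mollify $\om_{in}$, apply the identity to the smooth solutions, and pass to the limit using the stability of Yudovich solutions, or equivalently the uniqueness of the Lagrangian flow for log-Lipschitz fields.

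\textbf{The velocity bound \eqref{eq:lemma:apriori est 2}.} By \eqref{eq bound K}, $\sup_z \int_\Om |K(z,z')|\,dz'$ is finite: the local part is $\lesssim \int_{|z-z'|\le1} |z-z'|^{-1}\,dz' \lesssim 1$, and the far part is $\lesssim \int_\R e^{-\pi|x-x'|}\,dx' \lesssim 1$ because the channel has height one. Therefore $|u| \le \|\om\|_{L^\infty}\sup_z\|K(z,\cdot)\|_{L^1}$.

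\textbf{Weighted integrability and \eqref{eq:lemma:apriori est 3}.} Since $|U| \le \|f\|_{L^\infty} + C(\|f''\|_{L^\infty}+\|\om_{in}\|_{L^\infty})$, trajectories move at bounded speed. If $\Supp \om_{in} \subset [-L,L]\times[0,1]$, then by the representation formula $\om(t)$ is supported in $[-L-Ct, L+Ct]\times[0,1]$. Thus $e^{-ax}\om(t) \in L^1\cap L^\infty$ for every $a$, with a bound depending on $t$ (only the qualitative statement is claimed). For the velocity, write
\[
e^{-ax}u(z) = \int_\Om \big[e^{-a(x-x')}K(z,z')\big]\, e^{-ax'}\om(z')\,dz' .
\]
The modified kernel $\tilde K(z,z') := e^{-a(x-x')}K(z,z')$ satisfies $|\tilde K| \lesssim |z-z'|^{-1}$ when $|z-z'|\le 1$ (where $e^{|a||x-x'|}\le e^{\pi}$), and $|\tilde K| \lesssim e^{-(\pi-|a|)|x-x'|}$ otherwise. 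Hence
\[
\sup_z \int_\Om |\tilde K(z,z')|\,dz' + \sup_{z'} \int_\Om |\tilde K(z,z')|\,dz \lesssim 1 + (\pi-|a|)^{-1} \lesssim (\pi-|a|)^{-1},
\]
using $|a|<\pi$. Schur's test then yields \eqref{eq:lemma:apriori est 3} for all $1\le p\le\infty$. Applied with $p=1$ and $p=\infty$ together with the weighted bounds on $\om$, this also shows $e^{-ax}u \in L^1\cap L^\infty$.

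The only step that is not routine is justifying the Lagrangian identity for $W$ at Yudovich regularity, which I would handle by the approximation argument above. I expect everything else to be direct kernel bookkeeping.
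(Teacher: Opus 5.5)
Your proofs of \eqref{eq:lemma:apriori est 1}, \eqref{eq:lemma:apriori est 2}, and of the inequality \eqref{eq:lemma:apriori est 3} via Schur's test are correct and essentially the paper's. Your $W=\omega+f'(y)$ is indeed the quantity transported by \eqref{eq:eu_eq} as written, and the sign is irrelevant for the bound.

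The gap is the qualitative claim $e^{-ax}\omega(t),\,e^{-ax}u(t)\in L^1\cap L^\infty$. You assert that $\omega(t)$ stays supported in $[-L-Ct,L+Ct]\times[0,1]$. This is false as soon as $f''\not\equiv 0$, and your own representation formula shows why. For $z\notin\Supp\omega_{in}$ it reads $\omega(t,X(t,z))=f'(y)-f'(X^y(t,z))$, which vanishes only if the particle has not moved vertically. Bounded speed controls where particles go, but $\omega$ itself is not transported; it is forced by the nonlocal source $-f''u^y$. By \eqref{eq:def_K^y}, $K^y(z,z')>0$ whenever $x>x'$ and $y,y'\in(0,1)$. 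So, for instance, a nontrivial $\omega_{in}\ge 0$ gives $u^y>0$ at every interior point to the right of its support. Those particles are displaced vertically for small $t>0$, and $\omega(t)$ becomes nonzero there wherever $f''\neq 0$. Hence $\omega(t)$ has exponentially small but non-vanishing tails. Since $e^{-ax}$ grows as $x\to-\infty$ for $a>0$, membership in $L^1\cap L^\infty$ is a genuine decay statement about these tails, which finite propagation speed cannot give.

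The missing ingredient is a Gronwall argument in the weighted space; the paper does this via Duhamel. The function $\phi=e^{-ax}\omega$ solves $\p_t\phi+v\cdot\nabla\phi+a(f+u^x)\phi=\mathcal{T}\phi$, where $\mathcal{T}\phi:=-f''e^{-ax}u^y[e^{ax}\phi]$ and $u^y[\cdot]$ denotes the Biot--Savart law. Estimate \eqref{eq:lemma:apriori est 3} says exactly that $\mathcal{T}$ is bounded on every $L^p$ with norm $\lesssim\|f''\|_{L^\infty}(\pi-|a|)^{-1}$. The transport part, with its bounded zeroth-order coefficient, grows at most like $e^{Ct}$ on $L^1\cap L^\infty$. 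Together these give $\|\phi(t)\|_{L^1\cap L^\infty}\le Ce^{Ct}\|\phi(0)\|_{L^1\cap L^\infty}$.

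You can run the same argument inside your Lagrangian framework. Combine
\begin{itemize}
\item $|\omega(t,X(t,z))|\le|\omega_{in}(z)|+\|f''\|_{L^\infty}\int_0^t|u^y(s,X(s,z))|\,ds$,
\item $e^{-aX^x(t,z)}\le e^{|a|C(t-s)}e^{-aX^x(s,z)}$,
\item measure preservation of $X$,
\item and \eqref{eq:lemma:apriori est 3}.
\end{itemize}
This yields a Gronwall inequality for $\|e^{-ax}\omega(t)\|_{L^p}$, $p\in\{1,\infty\}$. To make it rigorous, first use the bounded integrable weights $w=\min\{e^{-ax},R\}e^{-\epsilon|x|}$, which satisfy $w(x)\le e^{(|a|+\epsilon)|x-x'|}w(x')$, so the same kernel bounds apply when $|a|+\epsilon<\pi$. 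Then let $R\to\infty$ and $\epsilon\to 0$.

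As written, your argument is complete only in the Couette case $f''\equiv 0$, where $\omega$ itself is transported and compact support does propagate.
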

\begin{proof}
Since the total vorticity  is transported by the flow, for any $(x,y)$ there exists $(X,Y)$ such that $\om(x,y) -f'(y) = \om_{in}(X,Y) -f'(Y)$. Then \eqref{eq:lemma:apriori est 1}  follows from  the mean value theorem:
\begin{equation}
\| \om  \|_{L^\infty} \leq    \| f'(y)-f'(Y)\|_{L^\infty} + \| \om_{in}  \|_{L^\infty}\leq    \| f''\|_{L^\infty} + \| \om_{in}  \|_{L^\infty} .
\end{equation}
The estimate \eqref{eq:lemma:apriori est 2} follows from Lemma \ref{lemma:kernal K}.

Next, we prove \eqref{eq:lemma:apriori est 3}. It suffices to consider any $0\leq  a  < {\pi}  $, and we have 
\begin{align*}
  \big| u  e^{ - a x} \big|  &  \leq \int |K (z,z')| e^{a(x' -x)}\phi(z') |\,dz' \nonumber \\ 
  &\lesssim  \int_{|z-z'| \le 1 } |z-z'|^{-1} e^{a } \phi(z')  \,dz' + \int_{|z-z'| \ge 1 }  e^{- {\pi}|x-x'| } e^{a(x' -x )}\phi(z')  \,dz'    .
 \end{align*}
So \eqref{eq:lemma:apriori est 3} follows from Young's inequality and the bound $  e^{a } \lesssim 1  \lesssim  (  {\pi}  - a  )^{-1}$. 
 
 Finally, we prove $e^{ - a x } \om   , e^{  - a x } u \in   L^1 \cap   L^\infty  $. Thanks to \eqref{eq:lemma:apriori est 3}, we only need to show  $e^{ - a x } \om \in   L^1 \cap   L^\infty  $.

Let $\phi = e^{ - a x } \om$. Since $\om_{in}$ has compact support, initially $\phi(0)\in L^1 \cap L^\infty$. We use a decoupling argument below to show $\phi(t)$ also inherits this property.

Treating $u$ as a known velocity field, the equation for $\phi$ becomes linear:
\begin{equation}\label{eq eq for we^{-ax}}
    \p_t \phi + a( f(y)+u^x)\phi +   f(y)\p_x \phi + u\cdot \nabla \phi = -u^y f''(y) e^{-ax}
\end{equation}
can be recast into the abstract form
\begin{equation}\label{eq eq for S}
\p_t\phi- \mathcal{L}  \phi = \mathcal{T}\phi,
\end{equation}
where the linear operator $\mathcal{T}:\phi \mapsto -u^y( e^{ax} \phi ) f''(y) e^{-ax} $.

Our goal is then showing the inhomogeneous equation \eqref{eq eq for S} propagates $ L^1\cap L^\infty$ regularity. To this end, let $S(t )$ be the semigroup of the homogeneous linear   equation $\p_t\phi- \mathcal{L}  \phi =0$. Since the total velocity $u+ (f(y),0)$ is bounded for any $t\ge 0$, we have 
\begin{equation}\label{eq esti for S}
\|S(t )\phi\|_{L^1  \cap L^{\infty}} \leq C(\|u\|_{L^\infty}, \|f \|_{L^\infty} )  e^{C|t |}\|\phi\|_{L^1  \cap L^{\infty}}.
\end{equation}

On the other hand, thanks to \eqref{eq:lemma:apriori est 3} the linear source term $\mathcal{T}\phi $ also obeys the bound
\begin{equation}\label{eq est for T}
\| \mathcal{T}\|_{L^p \to L^p} \lesssim \|f'' \|_{L^\infty} (  {\pi}  - |a | )^{-1} \quad \text{for any $1\leq p\leq \infty$}.
\end{equation}    

Therefore, by the Duhamel principle for $S(t )$ and the uniqueness of \eqref{eq eq for S} using \eqref{eq esti for S} and \eqref{eq est for T}, we obtain $\phi(t)  \in L^1 \cap L^\infty $ for any $t\geq 0$.

\end{proof}

\subsection{A maximum principle}

We proceed to show that the strong  exponential decay property of the Biot-Savart kernel allows us to extract the local damping effect of the transport term $f(y)\p_x \om$, leading to exponential decay on any compact sets. 

Here, we assume  a uniform lower bound $f(y)\ge c >0$  to suppress the non-local effects from both the nonlinear advection and linear stretching. The degenerate case, where $f(y)$ vanishes on the boundary, is more difficult and will be treated using energy-renormalization methods in the next section.

From now on, we fix $a = \frac{\pi}{2   } >0$ in the exponential weight and recall that $m_f  = \inf_{y \in [0,1]} f(y)>0$. We emphasize the weight  $e^{-a x}$ provides structural compatibility between the transport direction and the kernel decay, which is unique to shear flows in the infinite channel. Theorem \ref{thm:shear2 f>c} now follows from the following maximum principle.

\begin{lemma}\label{lemma:max_principle}
There exists $C_*>0$ such that the following holds. For any $f(y)$ satisfying \eqref{eq:thm 2 f >c shear assump} and a Yudovich solution  $\om$ of \eqref{eq:eu_eq} with compactly supported initial data $\om_{in} \in L^\infty$ satisfying \eqref{eq:thm 2 f >c perturb assump}, the weighted vorticity
$$
\Phi(x,y,t): = \om(x,y,t)  e^{-a (x- \frac{1}{2} m_f  t) }
$$   
satisfies the maximum principle:
\begin{equation}\label{eq:lemma:max_principle conclu}
 \| \Phi(t) \|_{L^\infty} \leq  \| \Phi(0) \|_{L^\infty}   \quad \text{for all $t\geq 0$}.
\end{equation}
\end{lemma}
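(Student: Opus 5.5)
We write the equation for $\Phi$, check that the weight produces a damping term of size comparable to $m_f$, bound the stretching source by a small multiple of $\|\Phi\|_{L^\infty}$, and close with a Duhamel formula along characteristics.

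\textbf{Equation for $\Phi$.} Set $c:=\tfrac12 m_f$ and $a=\tfrac{\pi}{2}$. Writing $\om=\Phi e^{a(x-ct)}$ in \eqref{eq:eu_eq} gives
\begin{equation}
\p_t\Phi+(f+u^x)\p_x\Phi+u^y\p_y\Phi+a\big(f(y)+u^x-c\big)\Phi=-f''(y)\,u^y e^{-a(x-ct)} .
\end{equation}
The left side is transport by the total velocity $(f+u^x,u^y)$, which is log-Lipschitz by Lemma \ref{lemma:kernal K}, plus a zeroth-order term. By Lemma \ref{lemma:apriori est}, $e^{-ax}\om\in L^1\cap L^\infty$ for every $t$, so $M(t):=\|\Phi(t)\|_{L^\infty}$ is finite.

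\textbf{Damping coefficient.} From \eqref{eq:lemma:apriori est 1}, \eqref{eq:lemma:apriori est 2}, \eqref{eq:thm 2 f >c shear assump} and \eqref{eq:thm 2 f >c perturb assump},
\begin{equation}
\|u\|_{L^\infty}\lesssim\|f''\|_{L^\infty}+\|\om_{in}\|_{L^\infty}\le 2C_*m_f .
\end{equation}
Choosing $C_*$ small gives $\|u\|_{L^\infty}\le \tfrac14 m_f$. Then $f+u^x-c\ge m_f-\tfrac14m_f-\tfrac12m_f=\tfrac14m_f$, so the damping coefficient is at least $\lambda:=\tfrac{a}{4}m_f$.

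\textbf{Source term.} We use the pointwise version of the argument behind \eqref{eq:lemma:apriori est 3}:
\begin{equation}
|u^y(z)|e^{-a(x-ct)}\le\int|K(z,z')|\,e^{a(x'-x)}|\Phi(z')|\,dz' .
\end{equation}
Split the integral into $|z-z'|\le1$ and $|z-z'|\ge1$ and use \eqref{eq bound K}. Since $a=\pi/2<\pi$, both pieces are integrable uniformly in $z$, so
\begin{equation}
|u^y|e^{-a(x-ct)}\le C_0\,M(t)
\end{equation}
with $C_0$ universal. The source is therefore bounded by $C_0C_*m_f\,M(t)$.

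\textbf{Closing via Duhamel.} Let $Z(s)$ be the unique characteristic of the log-Lipschitz total velocity. Along it,
\begin{equation}
\Phi(Z(t),t)=e^{-\int_0^tD}\,\Phi(Z(0),0)+\int_0^t e^{-\int_s^tD}\,S(Z(s),s)\,ds ,
\end{equation}
where $D\ge\lambda$ is the damping coefficient and $S$ the source. Fix $T$ and set $M^*:=\sup_{[0,T]}M$. For $t\le T$ this yields
\begin{equation}
M(t)\le e^{-\lambda t}M(0)+(1-e^{-\lambda t})\,\theta M^*,\qquad \theta:=\frac{C_0C_*m_f}{\lambda}=\frac{4C_0C_*}{a}.
\end{equation}
Shrinking $C_*$ further gives $\theta<1$. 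The right side is a convex combination of $M(0)$ and $\theta M^*$, hence $M^*\le\max(M(0),\theta M^*)$. If $M^*>0$ this forces $M^*\le M(0)$, which is \eqref{eq:lemma:max_principle conclu}.

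\textbf{Main obstacle.} The delicate step is justifying the Lagrangian Duhamel representation at Yudovich regularity, since $\Phi$ is only $L^\infty$ and the equation holds in the weak sense. I would handle it in one of two ways. The first is to note that the equation is linear in $\Phi$ once $u$ is frozen: $u$ is log-Lipschitz, so its flow map is unique, and the linear equation has a unique weak solution in $L^1\cap L^\infty$ (as in the decoupling argument of Lemma \ref{lemma:apriori est}), which must be given by the characteristic formula. The second is to mollify the data, run the argument for smooth solutions, and pass to the limit using stability of Yudovich solutions. Both routes use the exponential decay of $K$ to keep the weighted source bounded, so I do not expect any further loss there.
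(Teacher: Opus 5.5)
Your proposal is correct and is essentially the paper's proof: the same Lagrangian equation for the weighted vorticity along the log-Lipschitz flow, the same damping lower bound $\tfrac{a}{4}m_f$ obtained from $\|u\|_{L^\infty}\lesssim C_* m_f$, and the same weighted Biot--Savart bound on the stretching source, which leads to the same smallness requirement on $C_*$. The only difference is the last step: you integrate the ODE and bootstrap on $\sup_{[0,T]}\|\Phi\|_{L^\infty}$, whereas the paper argues that $\partial_t\overline{\Phi}<0$ at near-maximal Lagrangian points; your closure is slightly cleaner, but it relies on $\sup_{[0,T]}\|\Phi\|_{L^\infty}<\infty$ (you only noted that $M(t)$ is finite for each $t$), and that uniform bound is supplied by the proof of Lemma~\ref{lemma:apriori est}.
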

\begin{proof}
Thanks to Lemma \ref{lemma:apriori est}, $\| \Phi(t) \|_{L^\infty}<\infty$ for any $t\ge 0$. We switch to Lagrangian coordinates to handle the transport term. Let $Z(x,y,t)= (Z^x, Z^y): \Om \times [0,\infty ) \to \Om$ be the unique flow of $ (f(y) , 0 ) +u $, namely
\begin{equation}
\begin{cases}
    \frac{d Z}{dt} = (f(Z^y),0 ) + u(Z)  & \\
    Z(z)|_{t=0} = z .&
\end{cases}
\end{equation}
Since the total velocity field  $ (f(y) , 0 ) +u $ is log-Lipschitz, it suffices to prove the maximum principle for 
$$
 \overline{\Phi}: = \Phi \circ Z  =  e^{\frac{1}{2}a m_f  t } \Big(
 \om  e^{-a  x} \Big) \circ Z  .
 $$  
By Lemma \ref{lemma:apriori est}, $\overline{\Phi} \in L^\infty([0,T]; L^1 \cap L^\infty)$ for any $T>0$.

The inverse flow allows us to eliminate the transport terms, and the new function  $\overline{\Phi} (x,y,t): \Om \times [0,\infty ) \to \R $  is a weak solution to the equation
\begin{equation}\label{eq: eq for  overline Phi }
\begin{cases}
    \p_t \overline{\Phi} + a(  f\circ Z -   \frac{1}{2} m_f +u^x\circ Z)\overline{\Phi}   = -e^{\frac{1}{2} a m_f  t } (u^y f''(y) e^{-ax})\circ Z  &\\
   \overline{\Phi}|_{t= 0} = \om_{in}e^{-ax} \in L^1 \cap L^\infty  . &
\end{cases}
\end{equation}
Observe from \eqref{eq: eq for  overline Phi } that $t \mapsto \overline{\Phi}  $ is Lipschitz.

Fix   $\epsilon>0$ small.  It suffices to consider any point $ (x,y,t) $ in $\Omega \times \R^+ $ such that 
\begin{equation}\label{eq: eq for  overline Phi 0}
\overline{\Phi}(x,y,t) \geq (1-\epsilon)   \|\overline{\Phi}(t) \|_{L^\infty} : =(1-\epsilon) M(t)>0 . 
\end{equation}
An identical argument will cover the negative case $\overline{\Phi}(x,y,t) \le -(1-\epsilon) M(t) $.

Then at any point $ (x,y,t) $ in $\Omega \times \R^+ $  where \eqref{eq: eq for  overline Phi 0} holds, we have
\begin{align}\label{eq: eq for  overline Phi 1}
      \p_t \overline{\Phi} & \leq  a ( -\frac{ 1}{2}m_f +  |u^x|) \overline{\Phi}   + e^{\frac{1}{2} a  m_f t}  \|f''\|_{L^\infty} \| u^y   e^{-a x}  \|_{L^\infty} .
\end{align}
We first consider the first term on the right-hand side. Lemma \ref{lemma:apriori est} implies that
\begin{equation}\label{eq: eq for  overline Phi ux}
| u^x| \lesssim    \| f''\|_{L^\infty} +    \| \om_{in}\|_{L^\infty} .
\end{equation}
Choosing the universal constant $C_*>0$ small in the two assumptions \eqref{eq:thm 2 f >c shear assump} and \eqref{eq:thm 2 f >c perturb assump}, by \eqref{eq: eq for  overline Phi ux} we can ensure that $( -\frac{ 1}{2}m_f +  |u^x|) \leq    -\frac{ 1}{4}m_f  $.  Consequently, we have obtained a negative bound for the first term in \eqref{eq: eq for  overline Phi 1}
\begin{equation}\label{eq: eq for  overline Phi 1a}
    a ( -\frac{ 1}{2}m_f +   |u^x|) \overline{\Phi} \leq - \frac{\pi}{8 }   m_f M(t).
\end{equation}
For the second term in \eqref{eq: eq for  overline Phi 1}, using \eqref{eq:lemma:apriori est 3} and \eqref{eq: eq for  overline Phi 0} we obtain 
\begin{equation}\label{eq: eq for  overline Phi 1b}
\begin{aligned}
  e^{\frac{1}{2} a  m_f t}  \|f''\|_{L^\infty} \| u^y   e^{-a x}  \|_{L^\infty} & \leq   C   e^{\frac{1}{2} a  m_f t}  \|f''\|_{L^\infty} \| \om   e^{-a x}  \|_{L^\infty} \\
  &\leq  C     \|f''\|_{L^\infty}   M (t) .
\end{aligned}
\end{equation}
Combining \eqref{eq: eq for  overline Phi 1a} and \eqref{eq: eq for  overline Phi 1b} in \eqref{eq: eq for  overline Phi 1}, we can find  a small universal constant $C_*>0$ such that under the assumptions \eqref{eq:thm 2 f >c shear assump} and \eqref{eq:thm 2 f >c perturb assump}, there holds
\begin{align*}
\p_t \overline{\Phi} & \leq - \frac{\pi}{8 }    m_f M(t)  + e^{\frac{1}{2} a  m_f t} \|f''\|_{L^\infty} \| u^y e^{-a x}  \|_{L^\infty}  \\
      & \leq \Big(- \frac{\pi m_f}{8 }     +  C   \|f''\|_{L^\infty} \Big)  M (t)  <0 .
\end{align*}

We have proven for any $ (x,y,t) $ such that $ \overline{\Phi}(x,y,t) \geq (1-\epsilon)  \sup_{x,y}  \overline{\Phi}   $ there holds $\p_t \overline{\Phi} <0$, from which we immediately have $\| \overline{\Phi}(t) \|_{L^\infty}  \leq \| \overline{\Phi} (0) \|_{L^\infty} <\infty $.
\end{proof}

With the exponential in time decay of the weighted vorticity $\om e^{-ax}$, it is standard to obtain similar decay estimates and convergence for the associated velocity. 

\begin{corollary}
Let  $\om(t)$ be a Yudovich solution in Theorem \ref{thm:shear2 f>c} and $u(t)$ be its velocity field. Then for any $R>0$,  
 \begin{equation}\label{eq:thm:shear conclusion u}
        \| u (t) \|_{ L^\infty ( (-\infty,R]\times [0,1])} \lesssim_{R} e^{- \frac{1}{2} m_f t} \qquad   t \ge 0  .
    \end{equation}  
\end{corollary}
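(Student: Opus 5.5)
We convert the weighted $L^\infty$ decay from Lemma \ref{lemma:max_principle} into decay of the velocity through the weighted Biot--Savart estimate \eqref{eq:lemma:apriori est 3}, which applies with $a=\pi/2<\pi$ and $p=\infty$.

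First, Lemma \ref{lemma:max_principle} gives
\[
\|\om(t) e^{-ax}\|_{L^\infty} \le e^{-\frac12 a m_f t}\|\om_{in}e^{-ax}\|_{L^\infty} .
\]
The right-hand side is finite because $\om_{in}$ is compactly supported. Applying \eqref{eq:lemma:apriori est 3} with $p=\infty$ then yields
\[
\|u(t)e^{-ax}\|_{L^\infty}\lesssim (\pi-a)^{-1}\|\om(t)e^{-ax}\|_{L^\infty}\lesssim e^{-\frac12 a m_f t}\|\om_{in}e^{-ax}\|_{L^\infty}.
\]
On the half-channel $(-\infty,R]\times[0,1]$ we have $e^{-ax}\ge e^{-aR}$, so
\[
\|u(t)\|_{L^\infty((-\infty,R]\times[0,1])}\le e^{aR}\|u(t)e^{-ax}\|_{L^\infty}.
\]

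The one point to check is the time rate. The argument produces decay $e^{-\frac12 a m_f t}$ with $a=\pi/2$, which is the exponent $\frac{\pi}{4}m_f t$. Since $\pi/4<1/2$, this rate is slower than the stated $e^{-\frac12 m_f t}$, so the same issue already arises in deducing Theorem \ref{thm:shear2 f>c} from the lemma.

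To match the stated rate, I would re-run Lemma \ref{lemma:max_principle} with weight $e^{-a(x-c t)}$, choosing $a$ and $c$ so that $ac\ge \frac12 m_f$ and $a<\pi$. A valid choice is $a=\pi/2$ with $c=m_f/\pi<m_f$. The argument still goes through because the damping term $a(f-c+u^x)$ keeps a positive margin: $f-c\ge(1-1/\pi)m_f$, and $|u^x|\lesssim C_* m_f$ by \eqref{eq: eq for  overline Phi ux}. Shrinking $C_*$ if necessary, the margin absorbs both the $u^x$ contribution and the stretching term bounded in \eqref{eq: eq for  overline Phi 1b}.

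With that adjustment, the two displayed inequalities give $\|u(t)\|_{L^\infty((-\infty,R]\times[0,1])}\lesssim_R e^{-\frac12 m_f t}$, with implied constant $e^{aR}\|\om_{in}e^{-ax}\|_{L^\infty}$, which depends only on $R$ and the data.
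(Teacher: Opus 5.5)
Your first three displays are exactly the paper's proof, and they already finish the argument: Lemma \ref{lemma:max_principle} with $a=\pi/2$, then \eqref{eq:lemma:apriori est 3} at $p=\infty$, then $e^{-ax}\ge e^{-aR}$ on $(-\infty,R]\times[0,1]$. The rate issue you raise comes from an arithmetic slip: $\pi/4\approx 0.785>1/2$, so $e^{-\frac{\pi}{4}m_f t}\le e^{-\frac12 m_f t}$ for $t\ge 0$, and the lemma gives a faster rate than the one stated; hence there is no corresponding gap in deducing Theorem \ref{thm:shear2 f>c}, and your re-run of the maximum principle with $c=m_f/\pi$, though valid, is unnecessary and only recovers the weaker rate $e^{-\frac12 m_f t}$.
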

\begin{proof}
By \eqref{eq:lemma:apriori est 3} from Lemma \ref{lemma:apriori est} and Lemma \ref{lemma:max_principle}, for $a=\frac{\pi}{2}$ we have
\begin{equation}
\| u  e^{-ax} \|_{ L^\infty  } \lesssim  \| \om  e^{-ax} \|_{ L^\infty  } \lesssim e^{- \frac{1}{2} m_f t}
\end{equation}
and hence
\begin{equation}
\| u   \|_{ L^\infty ((-\infty,R] \times [0,1] ) } \lesssim_R  e^{- \frac{1}{2} m_f t} .
\end{equation}
\end{proof}

\section{Renormalization and conserved quantities}\label{sec:renorm}

The purpose of this section is two-fold. First, we adapt the renormalization framework of DiPerna-Lions~\cite{MR1022305} to our settings, laying the groundwork for the energy argument in the next section. Second, we leverage this framework, alongside the natural conserved quantities of our system, to prove the non-compactness results of Theorem \ref{thm:noncompact}.

\subsection{Renormalization identity}

The renormalization identity \eqref{eq:prop:renormal} below  tracks the flow of information without the derivatives required for standard energy estimates; this allows us to   compute the evolution of the weighted mass \eqref{def eq:main renormal M} in the stability argument in the next section.

\begin{proposition}\label{prop:renormal}
Let $f \in C^2$ and $\om$ be a Yudovich solution to \eqref{eq:eu_eq} with compactly supported initial data $\om_{ in} \in L^\infty(\Om)$. 

Let  $\beta:\R \to \R $ be $C^1$ (possibly unbounded) with $\beta(0) = 0$  and let $\varphi \in C^\infty_c(\Omega)$. 

Then the map $t \mapsto \int \beta(\omega) \varphi \, dz$ is bounded and Lipschitz continuous, and there holds 
\begin{equation}\label{eq:prop:renormal}
\frac{d}{dt} \int_\Om \beta(\omega) \varphi \, dz = \int_\Om \beta (\omega )  v \cdot \nabla \varphi   \, dz -\int_\Om f''(y) u^y \beta'(\omega) \varphi \, dz ,
\end{equation}
where $v: = (f(y) , 0) + u$ denotes the total velocity.
\end{proposition}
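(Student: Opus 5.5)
The plan is to prove \eqref{eq:prop:renormal} by the DiPerna--Lions commutator argument: mollify, apply the chain rule to the smooth approximant, and show that the commutator vanishes thanks to the log-Lipschitz (hence $W^{1,p}_{loc}$ for every $p<\infty$) regularity of the total velocity. Since $\varphi$ has compact support, no decay at infinity is needed. All quantities are locally bounded: $\om\in L^\infty$ by \eqref{eq:lemma:apriori est 1}, $u\in L^\infty$ by \eqref{eq:lemma:apriori est 2}, and $f\in C^2([0,1])$. Hence $\beta$ is only ever evaluated on the compact range $[-M,M]$, $M=\|f''\|_{L^\infty}+\|\om_{in}\|_{L^\infty}$. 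This also makes $t\mapsto\int\beta(\om)\varphi$ bounded, and unboundedness of $\beta$ causes no difficulty.

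\emph{Step 1 (weak formulation).} The vorticity satisfies, in distributions on $\Om\times(0,\infty)$,
\[
\p_t\om+\nabla\cdot(v\,\om)=-f''(y)u^y .
\]
Here $v=(f(y),0)+u$ is divergence free and tangent to $\p\Om$ (indeed $v^y=u^y=0$ on $y=0,1$). I will mollify in space only, $\om_\ep=\om*\rho_\ep$, with a test function $\varphi$ supported at distance $>2\ep$ from $\p\Om$ so that the convolution stays inside $\Om$. The result is
\[
\p_t\om_\ep+v\cdot\nabla\om_\ep=-(f''u^y)*\rho_\ep+r_\ep,\qquad r_\ep:=v\cdot\nabla\om_\ep-(v\cdot\nabla\om)*\rho_\ep .
\]
By the DiPerna--Lions commutator lemma, $r_\ep\to0$ in $L^1_{loc}$ because $v\in L^\infty_t W^{1,p}_{loc}$ and $\om\in L^\infty$. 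The needed Sobolev regularity of $u$ follows from Calder\'on--Zygmund estimates for $\nabla u=\nabla\nabla^\perp\Delta^{-1}\om$ on the channel, or directly from Lemma~\ref{lemma:kernal K}, since the log-Lipschitz modulus implies $\nabla u\in L^p_{loc}$ for all $p<\infty$. The factor $f(y)$ is $C^2$, so the shear part of $v$ is harmless.

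\emph{Step 2 (chain rule).} For the smooth-in-space $\om_\ep$, which is Lipschitz in time with values in distributions, I multiply by $\beta'(\om_\ep)\varphi$ and integrate. Using $\nabla\cdot v=0$ gives
\[
\frac{d}{dt}\int\beta(\om_\ep)\varphi=\int\beta(\om_\ep)\,v\cdot\nabla\varphi-\int\big((f''u^y)*\rho_\ep\big)\beta'(\om_\ep)\varphi+\int r_\ep\beta'(\om_\ep)\varphi .
\]
Every term on the right is bounded uniformly in $\ep$ and $t$, because $\beta,\beta'$ are bounded on $[-M,M]$. This yields the Lipschitz bound in time.

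\emph{Step 3 (passing to the limit).} I integrate in time and let $\ep\to0$. Then $\om_\ep\to\om$ a.e. and in $L^1_{loc}$, and $\beta,\beta'$ are continuous and bounded on $[-M,M]$, so dominated convergence handles the first two terms. The commutator term vanishes since $r_\ep\to0$ in $L^1_{loc}$ and $\beta'(\om_\ep)$ is uniformly bounded. The resulting time-integrated identity holds for every $t$ by weak continuity (Proposition~\ref{prop:Yudovich existence}), and differentiating gives \eqref{eq:prop:renormal} a.e. together with Lipschitz continuity.

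\emph{Step 4 (boundary).} I expect the main obstacle to be test functions whose support touches $\p\Om$; these are allowed, since the paper uses them on the closed channel. There are two routes. The first is to reflect $\om$ evenly across the walls, which is compatible with $u^y$ odd and $u^x$ even and with extending $f$. The second, simpler, route is to use a Lagrangian argument. The flow $Z$ of the log-Lipschitz field $v$ preserves $\Om$ and Lebesgue measure, and $\om\circ Z$ solves the ODE $\frac{d}{dt}(\om\circ Z)=-(f''u^y)\circ Z$ for a.e. label, as in the proof of Lemma~\ref{lemma:max_principle}. Then $\beta(\om\circ Z)$ obeys the classical chain rule. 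Changing variables gives
\[
\int\beta(\om)\varphi\,dz=\int\beta(\om\circ Z)\,\varphi\circ Z\,dz,
\]
and differentiating with $\frac{d}{dt}\varphi\circ Z=(v\cdot\nabla\varphi)\circ Z$ yields \eqref{eq:prop:renormal} directly, including at the boundary. I would present this Lagrangian argument as the main proof and mention the commutator approach for interior consistency.
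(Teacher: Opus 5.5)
\textbf{Verdict: correct, but your main proof takes a different route from the paper's at the boundary.}

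Your Steps 1--3 follow the paper's interior argument closely. For $\varphi$ supported away from $\p\Om$, the paper also mollifies in space only, controls the DiPerna--Lions commutator using $\om\in L^\infty$ and $v\in W^{1,p}_{loc}$, multiplies by $\beta'(\om_\ep)\varphi$, and passes to the limit.

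The two proofs differ in how test functions touching the walls are handled. The paper neither reflects nor goes Lagrangian. It applies the interior identity to $\chi_\ep(y)\varphi$, where $\chi_\ep=0$ for $d(y)\le\ep$ and $\chi_\ep=1$ for $d(y)\ge 2\ep$. The only new term is $\int\beta(\om)\varphi\,v^y\p_y\chi_\ep$. It vanishes in the limit because $v^y=u^y$ is continuous and zero on $\p\Om$, so it is small on the strip where $|\p_y\chi_\ep|\lesssim\ep^{-1}$.

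Your Lagrangian route is also correct. It needs two inputs. First, the Yudovich solution must be Lagrangian: $\om\circ Z$ satisfies the ODE for a.e.\ label along the flow $Z$ of $v$. Second, $Z(\cdot,t)$ must be a measure-preserving homeomorphism of $\Om$, which follows from Osgood uniqueness, $v^y|_{\p\Om}=0$ and $\nabla\cdot v=0$. Given these, the chain rule along trajectories and the change of variables give \eqref{eq:prop:renormal} for every admissible $\varphi$ at once, boundary included. Steps 1--3 then become unnecessary.

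The cost is reliance on that Lagrangian characterization and on measure preservation for a non-Lipschitz flow. Both are standard, and the paper itself uses them in Lemmas \ref{lemma:apriori est} and \ref{lemma:max_principle}. The paper's Eulerian proof needs only the distributional equation plus $v\in W^{1,p}_{loc}$. So it does not depend on how the solution was constructed, and it would still work for velocity fields without a classical flow.

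One correction. A log-Lipschitz modulus does \emph{not} give $\nabla u\in L^p_{loc}$. For example, $(g(y),0)$ with $g(y)=\sum_k 2^{-k}\cos(2^k y)$ is log-Lipschitz but not in $W^{1,1}_{loc}$, since $g$ is nowhere differentiable. So the phrase ``log-Lipschitz (hence $W^{1,p}_{loc}$)'' and the appeal to Lemma \ref{lemma:kernal K} are wrong. The bound $v\in W^{1,p}_{loc}$ for $p<\infty$ comes from the Calder\'on--Zygmund estimate on $\nabla\nabla^\perp\Delta^{-1}\om$ with $\om\in L^\infty$, which you also cite. This only affects the commutator route; the Lagrangian argument needs just the log-Lipschitz bound.
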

\begin{proof}
Before entering the proof, let us note that in our convention $\Om = \R \times [0,1]$ is closed, so the test functions $\varphi$ in \eqref{eq:prop:renormal} are not required to vanish at $\partial \Om$.

We first prove \eqref{eq:prop:renormal} for test functions $\varphi$ supported away from the boundary, i.e. $\Supp \varphi \subset \R \times (0,1)$. 
For $\ep>0$ sufficiently small such that
$
\ep<\frac12\operatorname{dist}(\Supp \varphi,\partial\Omega),
$
we define the spatial mollification on $\Supp \varphi$ by
$$
\om_{\ep}(z,t)=\eta_{\ep} * \om (z,t) 
:=
\int_{\Omega}\eta_\varepsilon(z-z')\omega(t,z')\,dz',
\qquad z\in \text{$\Supp \varphi$} .
$$
Then for sufficiently small $\ep>0$, we have  
\begin{equation}\label{eq:aux renor om_ep}
\p_t \omega_\ep + v\cdot \nabla \omega_\ep = -(f'' u^y)_\ep + R_\ep \quad \text{on $\Supp \varphi$ }
\end{equation}
where $(\cdot )_\ep= (\cdot)* \eta_\ep $ refers to the spatial mollification and $R_\ep$ is the DiPerna-Lions commutator
\begin{equation}
R_\ep(z) = v\cdot \nabla \omega_\ep - (v\cdot \nabla \omega)_\ep  .
\end{equation}
Since $\om \in L^\infty $ and $ v \in W^{1,p}_{loc}$  with uniform in time bounds, we have the classical DiPerna-Lions commutator estimates,
\begin{equation}\label{eq:aux renormal 2}
    \| R_\ep \|_{L^1_t L^1(\Supp \varphi)}  \to 0.
\end{equation}

Now we multiply \eqref{eq:aux renor om_ep} by $\beta'(\om_\ep) \varphi$ and integrate on $\Omega \times [0,t]$.   Integrating by parts in space for the transport term, we have
\begin{equation}\label{eq:aux renormal 1}
\begin{aligned}
  & \int_\Om \beta(\omega_\ep(t)) \varphi   -    \int_\Om \beta(\omega_\ep(0)) \varphi   \\
  & =   \int_0^t \int_\Om \beta (\omega_\ep)  v \cdot \nabla \varphi    -\int_0^t \int_\Om \beta'(\omega_\ep) (f'' u^y)_\ep  \varphi  + \int_0^t \int_\Om \beta'(\omega_\ep) R_\ep  \varphi   \\
  &: = I_\ep + J_\ep + K_\ep ,
\end{aligned}
\end{equation}
where the boundary terms at $y=0,1$ vanish since $\varphi$ is supported inside. 

Since $\beta $ is $C^1(\R)$ with $\beta(0) = 0$, $\beta (\omega_\ep) -\beta (\omega ) \to 0$ in $L^1_t L^1$ so the first term $I_\ep$ converges to its natural limit as $\ep \to 0$.

Since $(f'' u^y)_\ep \to  f'' u^y  $ in $L^1_t L^1$, it suffices to show $\beta'(\omega_\ep) \to \beta'(\omega) $   a.e.  in space-time by  dominated convergence. Using that  $\om_\ep \to \om $  $a.e.$ on $\Omega$ and $\beta'$ is continuous,  we obtain that $J_\ep $ converges to the desired limit as $\ep \to 0$.

Finally, thanks to \eqref{eq:aux renormal 2} $K_\ep \to 0 $ as $\ep \to 0$.

Passing to the limit $\ep \to 0$ in \eqref{eq:aux renormal 1}, we obtain that for any $\varphi$ supported away from $\partial \Omega$,
\begin{equation}\label{eq:aux renormal 3}
\begin{aligned}
  & \frac{d}{dt }\int_\Om \beta(\omega ) \varphi \, dz  & =    \int_\Om \beta (\omega )  v \cdot \nabla \varphi   \, dz-  \int_\Om \beta'(\omega )  f'' u^y   \varphi \, dz .
\end{aligned}
\end{equation}

To conclude \eqref{eq:prop:renormal} from \eqref{eq:aux renormal 3}, we take another approximation of the test functions $\chi_\ep(y) \varphi $ where $\chi_\ep(y) $ is a vertical cutoff  function that vanishes for $d(y)\le \ep $ and $\chi_\ep(y) = 1$ for $d(y)\ge 2\ep $. Then by the boundary conditions $v^y |_{\partial \Om} =0 $, $ \beta (\omega )  v \cdot \nabla (\chi_\ep \varphi) $ converges in $L^1_t L^1 $  to $\beta (\omega )  v \cdot \nabla  \varphi  $, which implies \eqref{eq:prop:renormal} from \eqref{eq:aux renormal 3}.
\end{proof}

For application of the energy method in the next section, we record here a variation of the renormalized identity.
\begin{corollary}\label{cor:renormal}

The previous proposition holds for $\beta(s)= |s|$ and test functions $\varphi \in C^\infty$ that satisfy 
\begin{equation}
\limsup_{|z| \to \infty } \frac{|\varphi|}{e^{a|z|}} <\infty \quad \text{for some $  a <\pi $.}
\end{equation}

\end{corollary}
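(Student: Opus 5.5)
We extend Proposition~\ref{prop:renormal} in two steps: first from $C^1$ renormalizations to $\beta(s)=|s|$, and then from compactly supported test functions to exponentially growing ones. In both steps we pass to the limit in the time-integrated form of \eqref{eq:prop:renormal}:
\begin{equation}
\int_\Om \beta(\om(t))\varphi\,dz-\int_\Om \beta(\om_{in})\varphi\,dz=\int_0^t\!\!\int_\Om \beta(\om)\,v\cdot\nabla\varphi\,dz\,ds-\int_0^t\!\!\int_\Om f''u^y\beta'(\om)\varphi\,dz\,ds .
\end{equation}
The final identity for $\beta(s)=|s|$ should read with $\beta'(\om)=\sign(\om)$, using the convention $\sign(0)=0$.

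\textbf{Step 1: $\beta=|\cdot|$ with $\varphi\in C^\infty_c$.} Take $\beta_\delta(s)=\sqrt{s^2+\delta^2}-\delta$. This is $C^1$, satisfies $\beta_\delta(0)=0$ and $|\beta_\delta(s)|\le|s|$, and converges to $|s|$ uniformly. Moreover $\beta_\delta'(s)=s/\sqrt{s^2+\delta^2}$ is bounded by $1$ and converges pointwise to $\sign(s)$, including the value $0$ at $s=0$. Apply Proposition~\ref{prop:renormal} with $\beta_\delta$. On the compact support of $\varphi$, the quantities $\om$, $v$, $u^y$ and $f''$ are bounded uniformly in time by Lemma~\ref{lemma:apriori est} and $f\in C^2$. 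Dominated convergence therefore passes every term to the limit $\delta\to0$. Lipschitz continuity in $t$ follows from the bound on the right-hand side.

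\textbf{Step 2: exponentially growing $\varphi$.} Fix $\varphi\in C^\infty$ with $|\varphi|\lesssim e^{a|z|}$ for some $a<\pi$. Let $\chi_R(x)=\chi(x/R)$ be a standard cutoff equal to $1$ on $[-R,R]$. Because $y\in[0,1]$, a cutoff in $x$ suffices and $\chi_R\varphi\in C^\infty_c(\Om)$. Apply Step 1 to $\chi_R\varphi$. The term $\int|\om|\,v\cdot\nabla(\chi_R\varphi)$ equals $\int|\om|\,v\cdot(\chi_R\nabla\varphi)$ plus an error $O(R^{-1})\int_{R\le|x|\le2R}|\om||v||\varphi|$. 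For convergence we need domination of the form
\begin{equation}
|\om|\,(|\varphi|+|\nabla\varphi|)\,(1+|v|)\in L^1_tL^1 \quad\text{on } [0,T].
\end{equation}
This requires $e^{a|x|}\om(t)\in L^1$ uniformly on $[0,T]$. Lemma~\ref{lemma:apriori est} gives $e^{-bx}\om\in L^1\cap L^\infty$ for every $|b|<\pi$, and its Duhamel argument yields bounds uniform on $[0,T]$. Since
\begin{equation}
e^{a|x|}\le e^{ax}+e^{-ax},
\end{equation}
we may take $b=\pm a$ to get the required integrability. Also $v$ is bounded, since $f$ is bounded and \eqref{eq:lemma:apriori est 2} controls $u$. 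If $\nabla\varphi$ is also assumed to grow at most like $e^{a|z|}$, which holds for the exponential weights used later, dominated convergence handles every term as $R\to\infty$. This includes the stretching term, where $|f''u^y\varphi|\lesssim e^{a|x|}|u|$ and $e^{\mp ax}u\in L^1$ by \eqref{eq:lemma:apriori est 3}.

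\textbf{Main obstacle.} The delicate point is the uniform-in-time integrability of $e^{\pm ax}\om$ and $e^{\pm ax}u$ on bounded time intervals. This is what justifies dominated convergence in Step 2, and it also gives the boundedness and Lipschitz continuity of $t\mapsto\int|\om|\varphi$. I would extract it by rereading the Duhamel bound in the proof of Lemma~\ref{lemma:apriori est}. That bound gives
\begin{equation}
\|\phi(t)\|_{L^1\cap L^\infty}\le Ce^{Ct}\|\phi(0)\|_{L^1\cap L^\infty},
\end{equation}
with $C$ depending on $\|u\|_{L^\infty}$, $\|f\|_{C^2}$ and $\pi-|a|$. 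The remaining steps are routine limit passages.
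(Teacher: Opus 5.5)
Your proposal is correct and follows the paper's proof. The paper also uses the approximation $\beta_\delta(s)=\sqrt{s^2+\delta^2}-\delta$ for compactly supported $\varphi$, then dominated convergence based on the exponential integrability of $\om$ and $u$ from Lemma~\ref{lemma:apriori est}. Two of your points are details the paper leaves implicit: that the Duhamel bound is uniform on bounded time intervals, and that $\nabla\varphi$ must also grow at most like $e^{a|z|}$ (automatic for the weight $e^{-ax}$ used later); the latter is genuinely needed for $\int|\om|\,v\cdot\nabla\varphi\,dz$ in \eqref{eq:prop:renormal} to be absolutely convergent.
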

\begin{proof}
By taking approximations $\beta_\ep(s) = \sqrt{s^2+\ep^2} - \ep$, we see that the renormalization identity \eqref{eq:prop:renormal} holds for $\beta(s)= |s|$ and $\varphi \in C^\infty_c$.

The claim that $\varphi$ can grow exponentially follows from the  dominated convergence; indeed, the exponential integrability of   $\om$ and $u$ is given by Lemma \ref{lemma:apriori est} since the initial data $\om_{in}$ has compact support.
\end{proof}

\subsection{Conservation laws with background shears}\label{subsec:conservation}

We now examine the conservation laws associated with \eqref{eq:eu_eq}. In our infinite channel setting, the presence of a non-decaying background shear causes standard conserved quantities of the 2D Euler equations---namely, total kinetic energy and enstrophy---to become infinite.

In order to establish the non-compactness statements  in the setting of a background shear in the infinite channel, we work with \emph{excess} or \emph{relative} counterparts of those conservation laws.

Precisely, we observe that for sufficiently smooth solutions   $\om$, the \emph{excess kinetic energy}
\begin{equation}\label{eq:def_energy}
E_u(t) := \int_{\Om} |(f(y), 0)+ u|^2 - |f(y)|^2 \, dz
\end{equation}
and the \emph{excess enstrophy}
\begin{equation}\label{eq:def_enstrophy}
E_{\om}(t) : =\int_{\Om} |-f'(y)+ \om |^2 - |f'(y)|^2 \, dz
\end{equation}
are finite and conserved quantities. Furthermore, the entire family of \emph{excess Casimirs}
\begin{equation}\label{eq:def_Casimir}
C_{\om}(t) : =\int_{\Om} \Phi(-f'(y)+ \om ) - \Phi( -f'(y) ) \, dz
\end{equation}
is conserved by \eqref{eq:eu_eq}. See~\cite{Mcintyre_Shepherd_1987}. These conservation laws serve as natural analogues to Arnold's energy-Casimir framework in  bounded domains~\cite{arnold66}.

We now pass to the renormalization framework to establish these claims.
\begin{proposition}\label{prop:conseved}
Let $f \in C^2([0,1])$ and let $\om (t)$ be a Yudovich solution to \eqref{eq:eu_eq} with compactly supported initial data $\om_{in} \in L^\infty$.

For any $\Phi \in C^1(\R)$, the excess Casimir of the solution $\om(t)$, defined by  \eqref{eq:def_Casimir}, is constant. 

Similarly, the  excess kinetic energy  is also constant.

\end{proposition}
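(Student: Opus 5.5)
Two facts drive the argument. The total vorticity $\Omega_{tot} := -f'(y)+\om$ is transported by the total velocity $v=(f(y),0)+u$, which is divergence free, and the initial perturbation has compact support. For the Casimirs we first make $\Phi$ effectively compactly supported: by \eqref{eq:lemma:apriori est 1}, $\Omega_{tot}$ and $-f'$ take values in a fixed compact interval $I$ for all times, so we may modify $\Phi$ outside $I$ and assume $\Phi\in C^1$ with $\Phi'$ bounded. Set $\beta_y(s) := \Phi(-f'(y)+s)-\Phi(-f'(y))$. This satisfies $\beta_y(0)=0$, is $C^1$ in $s$, and depends on $y$ only through the smooth (at least $C^1$) function $f'$. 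The excess Casimir is then $\int \beta_y(\om)\,dz$, whose integrand is bounded by $\|\Phi'\|_\infty |\om|$. This is integrable thanks to Lemma \ref{lemma:apriori est} (take $a=0$), so the quantity is finite.

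The main step is a renormalization identity for $y$-dependent $\beta$. We would rerun the proof of Proposition \ref{prop:renormal} directly on $\Omega_{tot}$ rather than on $\om$. The mollified total vorticity satisfies $\p_t(\Omega_{tot})_\ep + v\cdot\nabla(\Omega_{tot})_\ep = R_\ep$ on $\Supp\varphi$, with the same DiPerna-Lions commutator estimate \eqref{eq:aux renormal 2}, because $f'$ is Lipschitz and $v\in W^{1,p}_{loc}$. Multiplying by $\Phi'((\Omega_{tot})_\ep)\varphi$ and letting $\ep\to0$ gives
\[
\frac{d}{dt}\int \Phi(\Omega_{tot})\varphi\,dz = \int \Phi(\Omega_{tot})\, v\cdot\nabla\varphi\,dz
\]
for $\varphi\in C_c^\infty$, extended up to the boundary exactly as before via the cutoff $\chi_\ep$ and $v^y|_{\p\Om}=0$. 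The background term $\Phi(-f'(y))$ is steady and satisfies $v\cdot\nabla\Phi(-f'(y)) = u^y\,\p_y\Phi(-f')$. Subtracting this contribution and checking the difference yields
\[
\frac{d}{dt}\int \big[\Phi(\Omega_{tot})-\Phi(-f')\big]\varphi\,dz = \int \big[\Phi(\Omega_{tot})-\Phi(-f')\big] v\cdot\nabla\varphi\,dz + \int \Phi(-f')\, u\cdot\nabla\varphi\,dz ,
\]
where the last term uses $\operatorname{div}u=0$ together with $u^y=0$ on $\p\Om$.

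We then take $\varphi=\varphi_R(x)$, a horizontal cutoff equal to $1$ on $|x|\le R$ and with $|\nabla\varphi_R|\lesssim 1/R$ supported in $R\le|x|\le 2R$. Integrating in time over $[0,t]$, the first error term is bounded by $R^{-1}\|v\|_\infty\|\Phi'\|_\infty\int_0^t\!\int_{|x|\ge R}|\om|$. The second is bounded by $\|\Phi\|_{L^\infty(I)}\int_0^t\!\int_{|x|\ge R}|u^x|\,dz$. The exponential localization \eqref{eq:lemma:apriori est 3} shows that $\om,u\in L^1$, uniformly on $[0,t]$, so both tails vanish as $R\to\infty$. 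Hence the excess Casimir is constant. The energy statement follows the same way. Expanding, $E_u = \int 2f u^x + |u|^2\,dz$. With the stream function $\psi=\Delta^{-1}\om$, which vanishes on $\p\Om$ and decays exponentially, integration by parts gives $\int f u^x = \int F\om$ with $F'=f$ up to a constant; that constant produces boundary terms and is fixed by the no-flux condition. Also $\int|u|^2 = -\int\psi\om$. Thus $E_u$ is a combination of a Casimir-type linear term and the interaction energy, and the energy can be differentiated using the weak formulation and the Biot-Savart symmetry.

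We expect the main obstacle to be justifying the energy identity at Yudovich regularity, where the nonlocal term $\int\psi\,\p_t\om$ requires care. We would mollify as above and use $\psi\in W^{2,p}$ to handle it. We would also check that the boundary constants in $\int f u^x$ vanish; since $\int_{\R} u^x\,dx$ is the flux, this should come from $\p_x$ of the flux being zero and decay at $x\to\pm\infty$. The uniform-in-time tail control on $[0,t]$ is immediate from Lemma \ref{lemma:apriori est}.
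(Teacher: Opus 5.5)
Your Casimir argument is essentially the paper's. You renormalize the transported total vorticity $-f'+\om$ in the sense of DiPerna--Lions, subtract the steady background $\Phi(-f')$, test with a horizontal cutoff $\varphi_R$, and let $R\to\infty$ using the $L^1$ localization of Lemma~\ref{lemma:apriori est}. The only difference is bookkeeping. The paper keeps the background contribution as the source $u^y f''\Phi'(-f')$, passes to the limit, and then shows $\int_\Om u^y f''\Phi'(-f')\,dz=0$ by integrating by parts in $y$ and using $\p_y u^y=-\p_x u^x$. You integrate by parts before the limit, so the same term appears as the flux $\int \Phi(-f')\,u^x\varphi_R'\,dz$, which vanishes directly with the $L^1$ tail of $u$.

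For the energy, the paper only says the computation works in the velocity formulation and omits it. Your stream-function route, $E_u=2\int F\om\,dz-\int\psi\om\,dz$, is a legitimate alternative. Its advantage is that it never involves the pressure, whose limits as $x\to\pm\infty$ would otherwise need an argument.

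Two details need fixing when you write the energy part out.

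First, integrating $\int f u^x\,dz$ by parts leaves the wall terms $F(1)\int_\R u^x(x,1)\,dx-F(0)\int_\R u^x(x,0)\,dx$. These wall integrals are generally nonzero, so you cannot show they vanish. Instead take $F(y)=\int_0^y f(s)\,ds-y\int_0^1 f(s)\,ds$, so that $F(0)=F(1)=0$. This choice is allowed because $\int_\Om u^x\,dz=0$. The relevant flux is $\int_0^1 u^x(x,y)\,dy=\psi(x,0)-\psi(x,1)=0$, not $\int_\R u^x\,dx$.

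Second, the linear term is not conserved by itself. One finds $\frac{d}{dt}\int F\om\,dz=\int f'u^xu^y\,dz=-\frac{d}{dt}\frac12\int|u|^2\,dz$, the Reynolds--Orr exchange. The remaining terms, proportional to $\int\om u^y\,dz$ and $\int F f'' u^y\,dz$, vanish because $\om u^y=\Delta\psi\,\p_x\psi$ is a divergence and $\int_\R u^y\,dx=0$. So only the sum is constant.
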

\begin{proof}
Consider the total vorticity $W = -f'(y)+ \om \in L^\infty$ and the total velocity $v = (f(y) , 0) + u$. Observe that  $W$ satisfies the transport equation in the sense of distribution (compactly supported test functions)
\begin{equation}\label{eq: aux prop:conseved 1}
    \p_t W + v \cdot \nabla W = 0.
\end{equation}
Due to the regularity $W \in L^\infty$ and $v \in  W^{1,p}_{loc}$ for all $p<\infty$, $W$ is a renormalized solution in the sense of DiPerna-Lions~\cite{MR1022305}: for any $\Phi \in C^1(\R)$, 
\begin{equation}\label{eq: aux prop:conseved 2}
   \p_t \Phi (W)  + v \cdot \nabla \Phi (W) = 0 \quad \text{in the sense of distributions}.
\end{equation}
Note that test functions in $C^\infty_c(\Omega)$ for both \eqref{eq: aux prop:conseved 1} and \eqref{eq: aux prop:conseved 2} can take non-zero boundary values because  the channel definition $\Om = \R \times [0,1]$ imposes no-penetration boundary conditions.

We are interested in the excess density $\eta(z,t): = \Phi(W) - \Phi(-f'(y))$. Since $\Phi \in C^1$ and $f \in C^2$, this quantity $\eta : \Om \times [0,\infty) \to \R$  satisfies
\begin{equation}
\p_t \eta  + v \cdot \nabla \eta  =  u^y f''(y) \Phi'(-f'(y) )   
\end{equation}
in the sense of distribution. Moreover, the bound $ |\eta| \leq |\Phi'|_{L^\infty} |\om|  $ implies $\eta \in L^\infty( [0,T]; L^1\cap L^\infty)$ for any $T>0$.

Now to find the rate of change of the total excess Casimir $C_\omega(t) = \int_\Omega \eta dz$, we introduce for any $R>0$  a smooth horizontal cutoff function   $\varphi_R(x)$ such that $\varphi_R(x) = 1$ for $|x| \le R$ and $\varphi_R(x) = 0$ for $|x| \ge 2R $. It follows that
\begin{equation}\label{eq:aux prop:conseved 1}
\frac{d}{dt} \int_\Omega \eta \varphi_R dz =  \int_\Omega \eta  (v  \cdot \nabla  ) \varphi_R dz + \int_\Omega u^y f''(y) \Phi'(-f'(y)) \varphi_R dz .
\end{equation}
By the $L^1$ integrability of $\eta$ and $u^y$, we see that as $R \to \infty$,
\begin{equation}\label{eq:aux prop:conseved 2}
\begin{aligned}
 \int_\Omega \eta  (v  \cdot \nabla  ) \varphi_R dz & \to 0  \\
\int_\Omega u^y f''(y) \Phi'(-f'(y)) \varphi_R dz & \to  \int_\Omega u^y f''(y) \Phi'(-f'(y))   dz.
\end{aligned}
\end{equation}
It follows from \eqref{eq:aux prop:conseved 1} and \eqref{eq:aux prop:conseved 2} that for any $t\geq 0$, there holds
\begin{equation}
  \int_\Omega \eta(z,t)   dz  -   \int_\Omega \eta(z,0)   dz  =   \int_0^t \int_\Omega u^y f''(y) \Phi'(-f'(y))   dz dt .
\end{equation}
The boundary condition $u^y =0$ allows us to integrate by parts to see
\begin{equation}
    \int_\Omega u^y f''(y) \Phi'(-f'(y))   dz =     \int_\Omega \p_y u^y   \Phi(-f'(y)) = \int_\Omega -\p_x u^x   \Phi(-f'(y)) =0 .
\end{equation}
Hence $t\mapsto \int_\Omega \eta(z,t)   dz$ is constant.

The verification of \eqref{eq:def_energy} follows from the velocity formulation of the total velocity $v = (f(y) , 0 ) + u$. Since $v \in W^{1,p}_{loc}$, all the computations can be carried out at the level of $L^p$. We omit the computations.

\end{proof}

\subsection{Loss of compactness}\label{subsec:compactness}

Finally, we demonstrate that weakly convergent Yudovich solutions exhibit a loss of compactness arising from the transport of bulk vorticity to spatial infinity.

We can now prove Theorem \ref{thm:noncompact}  thanks to Proposition \ref{prop:conseved}.
\begin{proof}[Proof of Theorem \ref{thm:noncompact}]

We divide the proof into two cases.

\vspace{0.5em}
\noindent
\textbf{Case 1: the linear flows $f''(y) = 0$}

In this case, the right-hand side of \eqref{eq:eu_eq} vanishes, and the vorticity is transported by the total velocity $ v  =(f(y) , 0 ) + u $. Thus, $\| \om(t)\|_{L^p} = \| \om_{in}\|_{L^p} $ implies that the trajectory $\om(t)$ cannot be relatively compact in $L^p$ for any $1 \le p \le \infty$.

Now consider the trajectory $t \mapsto u(t)$ under the assumption that the initial excess energy $ E_{u} \ne 0 $. If there exists $t_n \to \infty$ such that $ u (t_n) $ is strongly convergent in $L^1$, then the limit has to be zero by the weak convergence $\om(t)\rightharpoonup 0$ assumption. Moreover, by Lemma \ref{lemma:apriori est}, there holds the bound $\| u (t_n)\|_{L^2}^2 \lesssim \| u (t_n)\|_{L^1} \| u (t_n)\|_{L^\infty} $, which implies strong convergence $ u (t_n) \to 0$ in $L^2$. As a result, we see  that
\begin{equation}\label{eq: aux thm:noncompactness 1}
\begin{aligned}
|E_u (t_n) | & \leq  2  \Big| \int f(y) u  dz \Big|  + \| u (t_n) \|_{L^2}^2  \\
& \leq  \| f\|_{L^\infty}\| u(t_n)\|_{L^1} + \| u(t_n)\|_{L^2}^2 \to 0
\end{aligned}
\end{equation}
a contradiction to $E_u(t) = E_{u}(0) \ne 0$.

\vspace{0.5em}
\noindent
\textbf{Case 2: general shear flows}

The general idea is similar to the previous case. Since the solution $\om(t)$ preserves these non-zero conserved quantities, strong convergence to zero is impossible unless the conserved quantities vanish identically.

Let us first consider the case where the initial excess enstrophy  $E_{\om}(0)  \ne 0$.

Assume that there exists $t_n \to \infty$ and $ \om(t_n) \to 0 $ strongly in $L^1$. Then by Lemma \ref{lemma:apriori est} again, $ \om(t_n) \to 0 $ strongly in $L^2$. Therefore, 
\begin{equation}\label{eq: aux thm:noncompactness 2}
\begin{aligned}
|E_\om (t_n) | & \leq  2  \Big| \int f'(y) \om  dz \Big|  + \| \om (t_n) \|_{L^2}^2  \\
& \leq  \| f'\|_{L^\infty}\| \om (t_n)\|_{L^1} + \| \om (t_n)\|_{L^2}^2 \to 0
\end{aligned}
\end{equation}
contradicting $E_{\om}(0)  \ne 0$.

Finally, we consider the case where the initial excess energy  $E_{u}(0)  \ne 0$. Then the same argument for \eqref{eq: aux thm:noncompactness 1} shows that $t \mapsto u(t)$ is not relatively compact in $L^1(\Omega)$. However, if $ \om(t_n ) \to 0$ strongly in $L^1$, then Lemma \ref{lemma:apriori est} also shows $ u(t_n ) \to 0$ strongly in $L^1$. So this case is also impossible.

\end{proof}

\section{Stability for degenerate shear   flows}\label{sec:degenerate}

In this section, we present the proof of Theorem \ref{thm:shear}, establishing that shear flows satisfying \eqref{eq:thm:shear assump f 1} and \eqref{eq:thm:shear assump f 2} are asymptotically stable. Our approach builds upon the ideas developed in the previous sections. However, unlike the pointwise decay arguments that rely on maximum principles, we adopt an energy-renormalization strategy based on a weighted $L^1$ mass functional.

The primary analytical challenge lies in the absence of a uniform lower bound on $f(y)$ near the boundary, where stagnation could potentially trap vorticity. Our energy-renormalization argument demonstrates that, even in this degenerate setting, the escape mechanism prevails. Our rationale is twofold:
\begin{itemize}
\item Linear stretching: We rely on the curvature bound \eqref{eq:thm:shear assump f 2} and the  boundary condition $u^y = 0$. Intuitively, condition \eqref{eq:thm:shear assump f 2} ensures that the generation of disturbances via linear stretching occurs at a slower rate than advection by the background shear.

\item Nonlinear advection: We utilize the symmetry enforced by our $L^1$ mass functional. Heuristically, the back-flow region is confined to a thin boundary layer, thereby inducing a vanishing weight for the vorticity within the double integral.
\end{itemize}

\subsection{Monotonicity of the weighted mass}

We are in a position to prove Theorem \ref{thm:shear}. The energy functional we use will penalize the movement of vorticity to the left, using the same exponential weight as in the previous section. This is possible thanks to Lemma \ref{lemma:apriori est}.

Now consider a shear profile $ f(y)$ satisfying the criterion \eqref{eq:thm:shear assump f 1} and \eqref{eq:thm:shear assump f 2}. The threshold constant $C_*>0$ will be taken to be small in the end, depending on a few universal constants.

Recall that $a =\frac{\pi}{2}$. Let us define the weighted mass
\begin{equation}\label{def eq:main renormal M}
    M(t) := \int_{\Om} |\om (z,t)|e^{-ax}\,dz.
\end{equation}
Our energy argument will focus on the evolution of $M$. Although our solutions only have $\om \in L^\infty$ regularity and the background shear is only $C^2$, thanks to Proposition \ref{prop:renormal} and Corollary \ref{cor:renormal} we have
\begin{equation}\label{eq:main renormal}
    \begin{aligned}
        \frac{dM}{dt} 
        &= -\frac{\pi}{2}\int f(y)|\om |e^{-ax}\,dz-\frac{\pi}{2}\int u^x |\om| e^{-ax} \,dz \\
        & \qquad -\int \sign(\om )  f''(y)u^y e^{-ax}\,dz.
    \end{aligned}
\end{equation}

In the rest of this section, we will first prove the following.
\begin{proposition}\label{prop:energy prop}
There exists $C_* >0$ such that if the shear profile $f \in C^2([0,1])$ satisfies \eqref{eq:thm:shear assump f 1} and \eqref{eq:thm:shear assump f 2} and the compactly supported initial data $\om_{in}\in L^\infty$ satisfies \eqref{eq:thm:shear perturb assump} with a sufficiently small $\ep =\ep(f)$, then
\begin{equation}
    \begin{aligned}
        \frac{dM}{dt} 
        &\leq -\frac{\pi{\delta}}{4}\int_{\Om} d(y) |\om  (z)|e^{-ax}\,dz .
    \end{aligned}
\end{equation}
\end{proposition}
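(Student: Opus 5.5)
The plan is to bound the three terms on the right-hand side of \eqref{eq:main renormal} separately. The transport term gives the damping: by \eqref{eq:thm:shear assump f 1},
\begin{equation}
-\frac{\pi}{2}\int f(y)|\om|e^{-ax}\,dz\le -\frac{\pi\delta}{2}\int d(y)|\om|e^{-ax}\,dz .
\end{equation}
It then suffices to show that the stretching term and the nonlinear term are each bounded by $\frac{\pi\delta}{8}\int d(y)|\om|e^{-ax}\,dz$ once $C_*$ and $\ep$ are small. Throughout I will use \eqref{eq:lemma:apriori est 1} in the form $\|\om(t)\|_{L^\infty}\le C_*\delta/2+\ep$, which is small relative to $\delta$. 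I will also use boundary-adapted refinements of Lemma \ref{lemma:kernal K}, read off from the explicit Green function. Since $G(z,z')$ vanishes when either point lies on $\p\Om$, one has $|G|\lesssim \min\{1+|\ln|z-z'||,\ d(y)d(y')|z-z'|^{-2}\}$ locally, with exponential decay in $|x-x'|$. Differentiating gives
\begin{equation}
|K^y(z,z')|\lesssim \min\Big\{\tfrac{1}{r},\tfrac{d(y)d(y')}{r^3}\Big\}e^{-\pi|x-x'|/2},\qquad |K^x(z,z')|\lesssim \min\Big\{\tfrac1r,\tfrac{d(y')}{r^2}\Big\}e^{-\pi|x-x'|/2},
\end{equation}
where $r=|z-z'|$ and the exponential factor applies at large separation.

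\textbf{Stretching term.} By \eqref{eq:thm:shear assump f 2} it is bounded by $C_*\delta\int d(y)|u^y|e^{-ax}\,dz$. I write $u^y=\int K^y\om$ and apply Fubini, which reduces the estimate to the kernel bound
\begin{equation}
\sup_{z'}\ \frac{1}{d(y')}\int_\Om d(y)\,|K^y(z,z')|\,e^{-a(x-x')}\,dz<\infty .
\end{equation}
The weight ratio is harmless because $a=\pi/2<\pi$. To prove the kernel bound, split the $z$-integral at $r=d(y')$. For $r<d(y')$ the region has area $\sim d(y')^2$, $d(y)\lesssim d(y')$ and $\int_{r<d(y')}r^{-1}\,dz\lesssim d(y')$, so this part is $\lesssim d(y')^2$. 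For $r>d(y')$ use $d(y)\le d(y')+r$; the integrand is then $\lesssim d(y')r^{-3}(d(y')+r)^2$, and integrating $r^{-3}\cdot r^2\cdot r\,dr$ over $r\lesssim 1$ (with exponential decay beyond) gives $\lesssim d(y')$. This yields the bound $C C_*\delta\int d|\om|e^{-ax}$, which is acceptable for small $C_*$.

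\textbf{Nonlinear term (main obstacle).} The term is $\frac{\pi}{2}\iint K^x(z,z')\om(z')|\om(z)|e^{-ax}\,dz\,dz'$. The naive estimate puts $\|\om\|_{L^\infty}$ on $|\om(z)|$, but $\int \min\{r^{-1},d(y')r^{-2}\}\,dz\sim d(y')(1+\ln\frac{1}{d(y')})$, which loses a logarithm at the wall. This loss reflects the back-flow boundary layer. To remove it I will exploit the double-integral structure that the paper emphasizes. First, symmetrize in $(z,z')$ with the bound $|\om(z)||\om(z')|$, so the effective kernel is $|K^x(z,z')|e^{-ax}+|K^x(z',z)|e^{-ax'}$. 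Second, split $K^x$ into its free part $-\p_y\frac{1}{2\pi}\ln|z-z'|$ and its image part. For the free part, which is odd under the exchange, the $L^1$ functional sees only $\om(z')|\om(z)|$. I would try to pair this with the corresponding $z\leftrightarrow z'$ term. The difference $e^{-ax}-e^{-ax'}=O(r)e^{-ax}$ removes the $1/r$ singularity, leaving a bounded kernel; on the region $d(y),d(y')\lesssim r$ this should again give a factor $\min(d(y),d(y'))$ with no logarithm. The image part behaves like $(y+y')^{-1}$-type kernels near each wall and is estimated exactly as in the stretching step, producing a factor $d(y')$. If the cancellation in the free part is only partial, my fallback is to control the residual $d(y')\ln(1/d(y'))$ piece directly: the set where $d(y)<\tau$ carries weighted mass at most $\|\om\|_{L^\infty}\tau$ in each vertical slice, and I would then choose $\ep(f)$ small depending on $\delta$ to absorb the log. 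Either way the target is $\lesssim (C_*\delta+\ep)\int d|\om|e^{-ax}$, which closes the estimate.
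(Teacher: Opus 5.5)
\textbf{The nonlinear term has a genuine gap.} Your damping and stretching steps are fine and follow the paper. Your split at $r=d(y')$ is a valid substitute for the paper's cruder bound $d(y)|K^y|\lesssim d(y')/|z-z'|$. One slip there: the far-field factor must be the full rate $e^{-\pi|x-x'|}$ of Lemma \ref{lemma:kernal K}. As written, with $e^{-\pi|x-x'|/2}$ and $a=\pi/2$, the weight $e^{-a(x-x')}$ exactly cancels the decay for $x<x'$, and the $x$-integral diverges.

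None of your proposed mechanisms for the nonlinear term removes the logarithm.

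(i) There is no usable cancellation. Once you bound by $|\om(z)||\om(z')|$, nothing is left to cancel. Before that, the symmetrized free-part integrand carries $\om(z')|\om(z)|e^{-ax}-\om(z)|\om(z')|e^{-ax'}$. When $\om(z)$ and $\om(z')$ have opposite signs, this equals $\pm|\om(z)\om(z')|(e^{-ax}+e^{-ax'})$, with no cancellation.

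(ii) Even where the cancellation works, it targets the wrong region. The $1/r$ diagonal singularity is integrable and harmless. A bounded kernel only gives $\|\om\|_{L^\infty}\int|\om|e^{-ax}$, with no factor $d$. The logarithm actually comes from $d(y')\ll r\ll 1$, i.e. from $z$ lying far above a point $z'$ near the wall.

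(iii) Splitting $K^x$ into free and image parts destroys the vanishing at the wall. For $y'\to0$ and $y\le\tfrac12$, $K_2^x$ is essentially a half-plane Poisson kernel at height $y+y'$, whose $x$-integral is of order one. Hence $\int_\Om|K_2^x(z,z')|\,dz\sim1$, not $d(y')$. In the stretching step the factor $d(y')$ came from $\sin(\pi y')$ in $K^y$, which $K_2^x$ does not have.

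(iv) The fallback cannot close. $\ln(1/d(y'))$ is unbounded. For $\om$ concentrated in a layer $\{d(y)<\tau\}$, the loss $\sim\ep\ln(1/\tau)\int d|\om|e^{-ax}$ beats the damping $\tfrac{\pi\delta}{2}\int d|\om|e^{-ax}$ for any fixed $\ep$. A slice-mass bound produces a term not proportional to $\int d|\om|e^{-ax}$, so the stated differential inequality does not follow.

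\textbf{The missing idea uses no cancellation at all.} It only uses the freedom to put the $L^\infty$ norm on whichever factor of $|\om(z)||\om(z')|$ you like. Split $(y,y')\in[0,1]^2$ according to which point is closer to a wall, as in the regions $A_1,\dots,A_4$ of Lemma \ref{lemma:u^x energy}.
\begin{itemize}
\item Where $z$ is closer (e.g. $y\le y'$, $y+y'\le 1$), $y$ ranges over an interval of length $d(y')$. Bound $|\om(z)|\le\|\om\|_{L^\infty}$ and use the uniform Poisson-type bound $\int_\R|K_i^x(z,z')|e^{-a(x-x')}\,dx\lesssim 1$. This gives $\|\om\|_{L^\infty}\int d(y')|\om(z')|e^{-ax'}\,dz'$.
\item Where $z'$ is closer, bound $|\om(z')|$ instead and use $\int_\R|K_i^x|\,dx'\lesssim1$ over a $y'$-interval of length $d(y)$. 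This gives $\|\om\|_{L^\infty}\int d(y)|\om(z)|e^{-ax}\,dz$.
\end{itemize}
This works for $K_1^x$ and $K_2^x$ separately. It yields $\lesssim(C_*\delta+\ep)\int d|\om|e^{-ax}$ with no logarithm, which is what you need to absorb into the damping.
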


\subsection{Estimates for the nonlinear part}
 
We now establish the estimate for the contribution of the nonlinear term to the weighted mass evolution \eqref{eq:main renormal}.

We recall that the horizontal velocity field $u^x: \Omega \times [0,\infty) \to \mathbb{R}$ is recovered via the Biot-Savart law $u^x(z)=\int K^x(z,z')\omega(z')\,dz'$, where the kernel decomposes into two components $K^x = K_1^x + K_2^x$: 
\begin{align}
K_1^x  &: = \frac{1}{4  }    \frac{-  \sin(\pi(y-y'))}{\cosh(\pi(x-x'))-\cos(\pi(y-y'))} \label{eq:def_K^x_1}\\
 K_2^x  &: =   \frac{1}{4  } \frac{\sin(\pi(y+y'))}{\cosh(\pi(x-x'))-\cos(\pi(y+y'))}   \label{eq:def_K^x_2}.
\end{align}

The key lemma below establishes the estimate for the nonlinear term, exploiting certain double symmetry in its $L^1$ formulation.

\begin{lemma}\label{lemma:u^x energy}
    Let $d(y) = \min\{y, 1- y \}$. For any $\om   \in L^{\infty}(\Omega)$, there holds
    \begin{equation}
         \int |u^x(z)| |\om  (z)| e^{-ax} \,dz \lesssim \| \om  \|_{L^{\infty}}   \int d(y)  | \om  | e^{-ax}  . 
    \end{equation} 
\end{lemma}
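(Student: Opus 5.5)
The plan is to expand $u^x$ via the Biot--Savart law and reduce the estimate to a double integral that can be symmetrized. Using $|u^x(z)|\le \int |K^x(z,z')||\om(z')|\,dz'$, we get
\begin{equation}
\int |u^x||\om| e^{-ax}\,dz \le \iint |K^x(z,z')|\,|\om(z)|\,|\om(z')|\,e^{-ax}\,dz\,dz' .
\end{equation}
The point of the $L^1$ functional is that this integrand is (up to the weight) symmetric in $z,z'$. Pointwise we cannot hope for $|u^x(z)|\lesssim \|\om\|_{L^\infty} d(y)$, since $u^x$ does not vanish on $\p\Om$. So we split the domain of integration into $A=\{d(y)\ge d(y')\}$ and $B=\{d(y)<d(y')\}$.

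\emph{Region $A$.} We keep $|\om(z)|e^{-ax}$ and bound $|\om(z')|\le \|\om\|_{L^\infty}$. It then suffices to show
\begin{equation}
\int_{d(y')\le d(y)} |K^x(z,z')|\,dz' \lesssim d(y).
\end{equation}

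\emph{Region $B$.} We keep $|\om(z')|$, bound $|\om(z)|\le\|\om\|_{L^\infty}$, and write $e^{-ax}=e^{-ax'}e^{-a(x-x')}$. It then suffices to show
\begin{equation}
\int_{d(y)< d(y')} |K^x(z,z')|\,e^{-a(x-x')}\,dz \lesssim d(y').
\end{equation}
Both reduce to the same kernel estimate: integrating $|K^x(z,z')|$ (times $e^{a|x-x'|}$) over a boundary strip of width $r$ around a point at distance about $r$ from the wall gives $O(r)$.

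For the far field $|x-x'|\ge 1$, Lemma \ref{lemma:kernal K} gives $|K|\lesssim e^{-\pi|x-x'|}$. With $a=\pi/2$ the weight $e^{a|x-x'|}$ is absorbed, and the strip of width $r$ contributes $O(r)$. In the bulk, where $r\gtrsim 1$, everything is trivially bounded by a constant $\lesssim r$. It therefore remains to treat the near field $|x-x'|\le 1$ with the relevant point close to one wall, say $y=0$ (the wall $y=1$ is symmetric, and when $y$ and $y'$ are near different walls we have $r\gtrsim 1$).

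The main obstacle is that the crude bound $|K|\lesssim |z-z'|^{-1}$ gives only $r\log(1/r)$ over such a strip. To remove the logarithm we exploit the image structure $K^x=K^x_1+K^x_2$. Expanding \eqref{eq:def_K^x_1}--\eqref{eq:def_K^x_2} near $y,y'\approx 0$ with $\cosh\pi s-\cos\pi t\approx \tfrac{\pi^2}{2}(s^2+t^2)$, the two pieces are $\approx (y'-y)/|z-z'|^2$ and $(y+y')/|z-\bar z'|^2$, where $\bar z'=(x',-y')$. I would verify the bound
\begin{equation}
|K^x(z,z')| \lesssim \min\Big\{\frac{1}{|z-z'|},\ \frac{y+y'}{|x-x'|^2+(y-y')^2}+\frac{y+y'}{|x-x'|^2+(y+y')^2}\Big\}.
\end{equation}
Given this bound, on a strip $\{0\le y\le r\}$ with $y'\sim r$, or symmetrically with $y\sim r$ and $0\le y'\le r$, we estimate as follows. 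The part $|x-x'|\le r$ contributes at most $\int_{|z-z'|\lesssim r}|z-z'|^{-1}\lesssim r$. The part $|x-x'|\ge r$ contributes at most
\begin{equation}
\int_0^r\!\!\int_{|s|\ge r}\frac{r}{s^2}\,ds\,dy\lesssim r.
\end{equation}
This is the desired $O(r)$ bound in both regions $A$ and $B$, and summing the two regions gives the lemma.
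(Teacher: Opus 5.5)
Your argument is correct, and its skeleton is the paper's. Your regions $\{d(y)\ge d(y')\}$ and $\{d(y)<d(y')\}$ are exactly the paper's $A_2\cup A_3$ and $A_1\cup A_4$ (up to null sets). In each region you bound the same factor of $\om$ in $L^\infty$ and keep the weight on the same variable.

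The difference is in how the strip bound is proved. The paper notes that each piece $K_i^x$ is, up to constants, a horizontal Poisson kernel $t/(s^2+t^2)$ with $s=x-x'$. For $K_1^x$ one takes $t=|y-y'|$. For $K_2^x$, $t$ is the distance from $y+y'$ to $\{0,2\}$; the paper writes $y+y'$, which is accurate only near the bottom wall, whereas your wall-by-wall symmetry uses the correct image. Consequently $\int_{\R}|K_i^x|e^{-a(x-x')}\,dx$ and $\int_{\R}|K_i^x|\,dx'$ are bounded uniformly in $y,y'$. Integrating horizontally first and then over a vertical set of measure at most $2d$ gives $O(d)$ in one line. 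There is no splitting at scale $r$, no near-/far-wall case analysis, and no logarithm ever appears.

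The $r\log(1/r)$ you flag is an artifact of the isotropic bound $|z-z'|^{-1}$. Your split at $|x-x'|=r$, with the image bound $\lesssim r/s^2$ on the outer part, is a correct but more laborious two-dimensional way of recovering the same anisotropic fact.

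One small correction: when $y$ and $y'$ lie near different walls and $r$ is small, what holds is $|y-y'|\gtrsim 1$, not $r\gtrsim 1$. So $|K^x|\lesssim 1$ there, and that contribution is trivially $O(r)$; your conclusion is unaffected.
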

\begin{proof}
For brevity, let us define the non-negative integrand
\begin{equation}\label{eq: aux lemma:u^x energy 1}
F_i(z,z'):=\left|K_i^x(z,z')\right| |\om(z')\om(z)|e^{-ax}.
\end{equation}
By expanding $u^x$ using \eqref{eq:def_K^x_1} and \eqref{eq:def_K^x_2}, it is enough to prove, for $i=1,2$, that
\begin{equation}\label{eq: aux lemma:u^x energy 2}
\iint_{\Omega\times\Omega} F_i(z,z')\,dz'\,dz
\lesssim
\|\om\|_{L^\infty}
\int_\Omega d(y)|\om(z)|e^{-ax}\,dz.
\end{equation}

We decompose the square
$[0,1]\times[0,1]$ in the variables $(y,y')$ into four regions:
\begin{equation}\label{eq: four regions y yp}
\begin{aligned}
A_1&:=\{(y,y'):\ y\le y',\ y+y'\le 1\},\\
A_2&:=\{(y,y'):\ y'\le y,\ y+y'\le 1\},\\
A_3&:=\{(y,y'):\ y\le y',\ y+y'\ge 1\},\\
A_4&:=\{(y,y'):\ y'\le y,\ y+y'\ge 1\},
\end{aligned}
\end{equation}
and consider the corresponding decomposition for $\iint  F_i(z,z')\,dz'\,dz$:
\begin{equation}\label{eq: decomposed four regions y}
\iint_{\Omega\times\Omega}F_i(z,z')\,dz'\,dz
=
  \iint_{A_1 \cup A_4}F_i(z,z')\,dz'\,dz +   \iint_{A_2 \cup A_3}F_i(z,z')\,dz'\,dz,
\end{equation}
where  the two terms have their integration over $(y,y')$ restricted to $A_1 \cup A_4$ and respectively $A_2 \cup A_3$.

For later use, let us first record the elementary $x$-integral bounds. Using \eqref{eq:def_K^x_1} and \eqref{eq:def_K^x_2}, for $K_1^x$, 
\begin{equation}\label{eq:K1x bound}
|K_1^x(z,z')|
\lesssim
\begin{cases}
\displaystyle
\frac{|y-y'|}{|x-x'|^2+|y-y'|^2},
& |z-z'|\le 1,\\[1em]
\displaystyle
|y-y'| e^{-\pi|x-x'|},
& |z-z'|\ge 1.
\end{cases}
\end{equation}
and for $K_2^x$, 
\begin{equation}\label{eq:K2x bound}
|K_2^x(z,z')|
\lesssim
\begin{cases}
\displaystyle
\frac{|y+y'|}{|x-x'|^2+|y+y'|^2},
& |z-z'|\le 1,\\[1em]
\displaystyle
|y+y'| e^{-\pi|x-x'|},
& |z-z'|\ge 1.
\end{cases}
\end{equation}
Since $a=\pi/2<\pi$, \eqref{eq:K1x bound} and \eqref{eq:K2x bound} imply the uniform estimates
\begin{equation}\label{eq: uniform x integral bounds}
\int |K_i^x(z,z')|e^{-a(x-x')}\,dx \lesssim 1,
\qquad
\int |K_i^x(z,z')|\,dx' \lesssim 1.
\end{equation}
We demonstrate the first bound in \eqref{eq: uniform x integral bounds} for $K_1^x$, as the estimates for the second one are the same. Indeed, for $K_1^x$, in the near-field region $|z-z'|\le 1$, we have $|x-x'|\le 1$, hence the weight
$e^{-a(x-x')}$ is bounded by a constant, and
$$
\int_{|x-x'|\le 1}
\frac{|y-y'|}{|x-x'|^2+|y-y'|^2}\,dx
\lesssim
\int_{\mathbb R}
\frac{|y-y'|}{ x^2+|y-y'|^2}\,dx
\lesssim 1.
$$
In the far-field region, using $a<\pi$ and $|y-y'| \leq 1$, 
$$
\int |y-y'| e^{-\pi|x-x'|}e^{-a(x-x')}\,dx
\lesssim
|y-y'|\int_{\mathbb R}e^{-(\pi-a)|x|}\,dx
\lesssim 1.
$$
To show the bound for $K_2^x$ in \eqref{eq: uniform x integral bounds}, one replaces $|y-y'|$ in the above argument by $|y+y'|$.

With \eqref{eq: uniform x integral bounds} established for $K^x_1$ and $K^x_2$, we now estimate the two pieces in the decomposition \eqref{eq: decomposed four regions y} separately.

\vspace{0.5em}
\noindent
\textbf{Case 1: Estimates in $A_1\cup A_4$.}

In the region $A_1$, for fixed
$y'\in[0,1]$, the admissible values of $y$ are given by
\begin{equation}\label{eq:y range A_1}
\begin{aligned}
\{y:\ (y,y')\in A_1\}
 & = \{y:\ 0\le y\le y',\ y+y'\le 1\}\\
 & = [0,\min\{y',1-y'\}] = [0,d(y')],
\end{aligned}
\end{equation}
and similarly, in the region $A_4$, we have
\begin{equation}\label{eq:y range A_4}
\begin{aligned}
\{y:\ (y,y')\in A_4\}
 & = \{y:\ 0\le y'\le y \le 1,\ y+y'\ge 1\}\\
 & = [ \max\{y',1-y'\}, 1] = [1- d(y') , 1].
\end{aligned}
\end{equation}
Therefore both of these regions have $y$-projection of size $d(y') $.

Using $e^{-ax}=e^{-ax'}e^{-a(x-x')}$, bounding $|\om(z)|$ by
$\|\om\|_{L^\infty}$, and applying \eqref{eq: uniform x integral bounds}, \eqref{eq:y range A_1} and \eqref{eq:y range A_4}, we obtain
\begin{equation}\label{eq: A1 estimate}
\begin{aligned}
& \iint_{A_1 \cup A_4 }F_i(z,z')\,dz'\,dz \\
&\lesssim
\|\om\|_{L^\infty}
\int_\Omega |\om(z')|e^{-ax'} \times
\left(
\int_{ \{ y: (y,y')\in A_1 \cup A_4\} }
\int |K_i^x(z,z')|e^{-a(x-x')}\,dx\,dy
\right)
dz'
\\
&\lesssim
\|\om\|_{L^\infty}
\int_\Omega d(y')|\om(z')|e^{-ax'}\,dz'.
\end{aligned}
\end{equation}

\vspace{0.5em}
\noindent
\textbf{Case 2: Estimates in $A_2\cup A_3$.}

 In the region $A_2$, for fixed $y\in[0,1]$, the admissible values of $y'$ are
\begin{equation}\label{eq:y range A_2}
\begin{aligned}
\{y':\ (y,y')\in A_2\}
 & = \{y':\ 0\le y'\le y,\ y+y'\le 1\}\\
 & = [0,\min\{y,1-y\}] = [0,d(y)],
\end{aligned}
\end{equation}
while in the region $A_3$, we have
 \begin{equation}\label{eq:y range A_3}
\begin{aligned}
\{y':\ (y,y')\in A_3\}
 & = \{y':\ y\le y'\le 1,\ y+y'\ge 1\} \\
 & = [\max\{y,1-y\},1] = [1-d(y),1].
\end{aligned}
\end{equation}

Now we will integrate in $x',y'$ first. Bounding $|\om(z')|$ by $\|\om\|_{L^\infty}$ and applying
\eqref{eq: uniform x integral bounds}, \eqref{eq:y range A_2}, and \eqref{eq:y range A_3} , we get
\begin{equation}\label{eq: A2 estimate}
\begin{aligned}
& \iint_{A_2 \cup A_3  }F_i(z,z')\,dz'\,dz  \\
& \lesssim
\|\om\|_{L^\infty}
\int_\Omega |\om(z)|e^{-ax} \times
\left(
\int_{y' : (y,y') \in A_2\cup A_3}
\int |K_i^x(z,z')|\,dx'\,dy'
\right)
dz
\\
&\lesssim
\|\om\|_{L^\infty}
\int_\Omega d(y)|\om(z)|e^{-ax}\,dz.
\end{aligned}
\end{equation}

Combining \eqref{eq: A1 estimate} and  
\eqref{eq: A2 estimate}, we obtain
$$
\iint_{\Omega\times\Omega}F_i(z,z')\,dz'\,dz
\lesssim
\|\om\|_{L^\infty}
\int_\Omega d(y)|\om(z)|e^{-ax}\,dz.
$$
\end{proof}

\subsection{Estimates for the linear stretching}
Next, we bound the contribution of the linear source term $-f''(y)u^y$. It is for this specific term that we must impose an upper bound on the curvature $|f''|$ near the boundaries. Note that compared to the cases of periodic channels~\cite{MR4628607,MR4740211},  we do not need $f''$ to vanish near the boundary.

Let us recall the formula for $K^y(z,z')$:
\begin{equation}\label{eq:def_K^y}
\begin{aligned} 
K^y(z,z') & = \frac{1}{2 } \frac{\sinh(\pi(x-x')) \sin(\pi y) \sin(\pi y')}{\left[ \cosh(\pi(x-x')) - \cos(\pi(y-y')) \right] \left[ \cosh(\pi(x-x')) - \cos(\pi(y+y')) \right]} .
\end{aligned}
\end{equation}

\begin{lemma}\label{lemma:u^y energy}
If $|f''(y)| \le c_f d(y)$ for some $c_f > 0$, then  for any $ \om  \in L^{\infty}$,   there holds
\begin{equation} \label{eq:lemma:u^y energy}
        \int \left| f''(y)u^ye^{-ax}\right| \,dz  \lesssim   c_f \int  d(y) | \om |e^{-ax}\,dz. 
\end{equation}
\end{lemma}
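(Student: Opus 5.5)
We prove \eqref{eq:lemma:u^y energy} by expanding $u^y$ through the Biot--Savart law, moving the absolute value inside the kernel integral, and applying Fubini. The key point is that $K^y$ in \eqref{eq:def_K^y} carries the factor $\sin(\pi y)\sin(\pi y')$, which vanishes linearly at both walls in \emph{both} variables. Thus the weight $d(y)$ on the output side, coming from $|f''(y)|\le c_f d(y)$, can be traded for a weight $d(y')$ on the source side. We write
\begin{equation}
\int |f''(y)u^y(z)|e^{-ax}\,dz \le c_f \int_\Omega |\om(z')|e^{-ax'} \Big( \int_\Omega d(y)\,|K^y(z,z')|\,e^{-a(x-x')}\,dz\Big)\,dz',
\end{equation}
using $e^{-ax}=e^{-ax'}e^{-a(x-x')}$. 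It then suffices to prove the kernel bound
\begin{equation}
\int_\Omega d(y)\,|K^y(z,z')|\,e^{-a(x-x')}\,dx\,dy \lesssim d(y') \qquad \text{uniformly in } z'\in\Omega .
\end{equation}

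To prove this, we bound $K^y$ pointwise. Write $X=x-x'$, and use $\sin(\pi y)\lesssim d(y)$, $\sin(\pi y')\lesssim d(y')$, and $\cosh(\pi X)-\cos\theta\gtrsim X^2+\mathrm{dist}(\theta,2\pi\mathbb Z)^2$ for $|X|\lesssim1$. In the far field $|X|\ge1$, the numerator $|\sinh(\pi X)|$ against a denominator of order $\cosh^2(\pi X)$ gives $|K^y|\lesssim d(y)d(y')e^{-\pi|X|}$. Since $a=\pi/2<\pi$, the $x$-integral converges and this region contributes $\lesssim d(y')$. In the near field $|X|\le 1$, the weight $e^{-aX}$ is harmless and $|\sinh(\pi X)|\lesssim|X|$. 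The second denominator factor is $\gtrsim X^2+\min(y+y',2-y-y')^2$. Since $d(y)\le y+y'$ and $d(y)\le 2-y-y'$, the weight $d(y)$ times $\sin(\pi y)$ can absorb this factor, up to constants. Altogether,
\begin{equation}
d(y)|K^y|\lesssim d(y')\,\frac{|X|\,d(y)\sin(\pi y)}{(X^2+(y-y')^2)\,(X^2+\min(y+y',2-y-y')^2)} \lesssim d(y')\,\frac{|X|}{X^2+(y-y')^2}.
\end{equation}

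The remaining near-field integral is $\int_{|X|\le1}\int_0^1 \frac{|X|}{X^2+(y-y')^2}\,dy\,dX$. Integrating first in $y$ gives $\lesssim \arctan(1/|X|)\lesssim 1$, and then integrating in $|X|\le 1$ gives a bounded quantity. This is only a logarithmic-type singularity, integrable in two dimensions, so the near field also contributes $\lesssim d(y')$. Plugging the kernel bound back and renaming $z'\to z$ yields \eqref{eq:lemma:u^y energy}. This is the same strategy as Case 1 of Lemma \ref{lemma:u^x energy}: integrate out the $z$ variable with the weight $e^{-a(x-x')}$, here with no restriction to a subregion because the $d(y')$ factor comes directly from the kernel.

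The main obstacle is the near-field pointwise bound near the corners, where $y,y'$ are close to the same wall. There both denominator factors degenerate simultaneously, and one must check carefully that the numerator $\sin(\pi y)\sin(\pi y')$, together with the extra $d(y)$, compensates without losing the factor $d(y')$. I would first try the cruder bound $d(y)\sin(\pi y)\lesssim (y+y')^2$ near $y=0$ and the analogous bound near $y=1$, which kills the second denominator factor outright. If constants misbehave, the fallback is to split into $|y-y'|\le d(y')/2$ and its complement. In the complement, $|X|^2+(y-y')^2\gtrsim d(y')^2$ gives direct control; in the first region, $d(y)\sim d(y')$ and the above estimate applies cleanly.
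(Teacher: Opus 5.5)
Your proof is correct and follows essentially the same route as the paper. In both, Tonelli reduces the estimate to the kernel bound $\int_\Omega d(y)e^{-a(x-x')}|K^y(z,z')|\,dz\lesssim d(y')$, which is then checked with pointwise near- and far-field bounds: $d(y)|K^y|\lesssim d(y')/|z-z'|$ near the diagonal and $\lesssim d(y')e^{-\pi|x-x'|}$ away from it. The differences are cosmetic: you treat both walls at once via $\min(y+y',2-y-y')$ rather than splitting $y'\le\frac12$ and $y'\ge\frac12$, and your bound $d(y)\sin(\pi y)\lesssim\min(y+y',2-y-y')^2$ already covers the corners, so the fallback splitting you mention is unnecessary.
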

\begin{proof}
We begin by substituting the Biot-Savart formula $u^y(z) = \int K^y(z,z') \omega(z') \, dz'$ into the left-hand side. Using the curvature hypothesis $|f''(y)| \le c_f d(y)$ and applying Fubini's theorem to interchange the order of integration, we obtain 
\begin{equation}\label{eq bound K^y0} 
    \begin{aligned}
        \int \left| f''(y)u^ye^{-ax}\right| \,dz &\lesssim c_f \iint     d(y) e^{-a(x-x')}|K^y(z,z')| \,dz \,| \om  (z')|e^{-ax'}\,dz'.
    \end{aligned}
\end{equation}

By comparing \eqref{eq bound K^y0} with our target estimate \eqref{eq:lemma:u^y energy}, the lemma reduces to proving the following uniform bound on the inner integral
\begin{equation}\label{eq aux bound K^y} 
\int_{ \Omega }d(y) e^{-a(x-x')} |K^y(z,z')| \, dz   \lesssim d(y') \quad \text{for all $z'\in \Om$}.
\end{equation}

To establish \eqref{eq aux bound K^y}, we consider separately the two cases $y' \le \frac12$ and $y' \ge \frac12$.

\vspace{0.5em}
\noindent
\textbf{Case 1: Estimates in the region $y' \le \frac12$.} 

In this region, the distance to the boundary is simply $d(y') = y'$. 
Similarly to the bounds derived for $K^x$ in Lemma \ref{lemma:u^x energy},  we can bound the weighted kernel pointwise as follows:
\begin{equation}\label{eq bound K^y1} 
        d(y)|K^y(z,z')| \lesssim
    \begin{cases}
  \frac{|x -x'| y^2 y' }{(|x-x'|^2+|y-y'|^2) (|x-x'|^2+|y+y'|^2) }  &\quad\text{if $|z-z'|\le 1 $  } \\
 y^2 y' e^{-\pi|x-x'|} &\quad\text{if $|z-z'| \ge 1 $.  } 
\end{cases}
\end{equation}
We can simplify the near-field bound by estimating the non-singular denominator terms from below, yielding the simplified upper bounds:
\begin{equation}\label{eq bound K^y2} 
        d(y)|K^y(z,z')| \lesssim
    \begin{cases}
  \frac{y' }{|z-z'| }  &\quad\text{if $|z-z'|\le 1 $  } \\
 y^2 y' e^{-\pi|x-x'|} &\quad\text{if $|z-z'| \ge 1 $. }
\end{cases}
\end{equation}

We now integrate these pointwise bounds against the exponential weight $e^{-a(x-x')}$. For the near-field region ($|z-z'| \le 1$), the singularity $1/|z-z'|$ is   integrable in 2D, giving the estimates
\begin{equation}
\begin{aligned}
\int_{|z-z'| \le 1 } d(y) e^{-a(x-x')}  |K^y(z,z')| \, dz  & \lesssim \int \frac{y'}{|z-z'|}    \lesssim y'.
\end{aligned}
\end{equation}

For the far-field region ($|z-z'| \ge 1$), the exponential decay of the kernel $e^{-\pi|x-x'|}$ dominates the weight $e^{-a(x-x')}$ (since $a=\frac{\pi}{2} < \pi$). Thus,  we also obtain the same desired bound:
\begin{equation}
\int_{|z-z'| \ge 1 } d(y) e^{-a(x-x')}  |K^y(z,z')| \, dz \lesssim y' \int_{|z-z'| \ge 1} y^2 e^{-(\pi-a)|x-x'|} \,dz \lesssim y'.
\end{equation}

Combining these two cases, we conclude that for any $ y' \le \frac12 $ 
\begin{equation}\label{eq bound K^y2 b} 
\int_{ \Omega }e^{-a(x-x')} d(y)|K^y(z,z')| \, dz   \lesssim y'   = d(y').
\end{equation}

\vspace{0.5em}
\noindent
\textbf{Case 2:  Estimates in the region $y' \ge \frac12$.} 

By the symmetry of the channel, the analysis for $y' \ge \frac12$ (where $d(y') = 1-y'$) is entirely analogous.

We adapt the pointwise bounds to reflect distances from the top boundary 
\begin{equation}\label{eq bound K^y4} 
        d(y)|K^y(z,z')| \lesssim
    \begin{cases}
  \frac{1-y' }{|z-z'| }  &\quad\text{if $|z-z'|\le 1 $  } \\
 (1-y)^2 (1-y') e^{-\pi|x-x'|} &\quad\text{if $|z-z'| \ge 1 $.  }
\end{cases}
\end{equation}
Integrating these bounds exactly as in Case 1 yields the corresponding upper bound:
\begin{equation}\label{eq bound K^y5}
\int_{ \Omega } e^{-a(x-x')} d(y)|K^y(z,z')| \, dz   \lesssim 1-y' = d(y').
\end{equation}

\vspace{0.5em}
\noindent
\textbf{Conclusion.}

We have   established \eqref{eq aux bound K^y} uniformly across the entire domain $z'\in \Om$ by \eqref{eq bound K^y2 b} and \eqref{eq bound K^y5}. Substituting it back into \eqref{eq bound K^y0}   concludes the proof:
    \begin{align*}
        \int \left| f''(y)u^ye^{-ax}\right| \,dz &\lesssim c_f \int_{y' \le \frac12}    y'   \,| \om  (z')|e^{-ax'}\,dz'+ c_f \int_{y' \ge \frac12}   ( 1-y' )  \,| \om  (z')|e^{-ax'}\,dz' \\
        &\lesssim c_f \int    d(y')   \,| \om  (z')|e^{-ax'}\,dz' .
    \end{align*}

 \end{proof}

\subsection{Conclusion of the proof of Proposition \ref{prop:energy prop}}
 
With Lemma \ref{lemma:u^x energy} and Lemma \ref{lemma:u^y energy} we can conclude Proposition \ref{prop:energy prop}.

\begin{proof}[Proof of Proposition \ref{prop:energy prop}]

Since for some $\delta>0$,   $|f''(y)| \le C_* \delta d(y)$, using the estimates from Lemma \ref{lemma:u^x energy} and Lemma \ref{lemma:u^y energy}, we have
\begin{equation}
\begin{aligned}
       \frac{dM}{dt} 
        &  \leq -\frac{\pi}{2}\int f(y)|\om |e^{-ax}\,dz  \\
        & \qquad + C \delta C_* \int d(y)|\om |e^{-ax}\,dz + C \| \om \|_{L^\infty} \int d(y)|\om |e^{-ax}\,dz,
\end{aligned}
\end{equation}
where $C>0$ denotes some universal constants independent of $f$ or $\om$. Note that  we have not yet chosen the constant $C_*$ nor the constraint $\ep(f)$ for the initial perturbation.

Since $f(y) \ge \delta d(y)$, there hold
\begin{equation}\label{eq:aux proof of prop 3.3 1}
\begin{aligned}
       \frac{dM}{dt} 
        &  \leq -\frac{\pi \delta}{2}\int d(y)|\om |e^{-ax}\,dz  \\
        & \qquad + C \delta C_* \int d(y)|\om |e^{-ax}\,dz + C \| \om \|_{L^\infty} \int d(y)|\om |e^{-ax}\,dz.
\end{aligned}
\end{equation}  

Now we can first choose $C_*>0$ in the shear flow assumption \eqref{eq:thm:shear assump f 2} so that the second term on the right-hand side of \eqref{eq:aux proof of prop 3.3 1} can be absorbed by the first negative damping term:
\begin{equation}\label{eq:aux proof of prop 3.3 2}
\begin{aligned}
       \frac{dM}{dt} 
        &  \leq -\frac{3\pi \delta}{8}\int d(y)|\om |e^{-ax}\,dz    + C \| \om \|_{L^\infty} \int d(y)|\om |e^{-ax}\,dz.
\end{aligned}
\end{equation}

Similarly, we can further choose the perturbation threshold $\ep(f)$ in \eqref{eq:thm:shear perturb assump} (depending on $\delta>0$ and $C_*>0$) so that the remaining positive term on the right-hand side of \eqref{eq:aux proof of prop 3.3 2} can also be absorbed, yielding the desired conclusion for any shear flow $f$ satisfying \eqref{eq:thm:shear assump f 1} and \eqref{eq:thm:shear assump f 2}:
\begin{equation}\label{eq:aux proof of prop 3.3 3}
\begin{aligned}
       \frac{dM}{dt} 
        &  \leq -\frac{ \pi \delta}{4}\int d(y)|\om |e^{-ax}\,dz    .
\end{aligned}
\end{equation}

\end{proof}
 
Now Proposition \ref{prop:energy prop} states that the weighted $L^1$ mass is non-increasing in time, and we need to upgrade this to the local vanishing statement in Theorem \ref{thm:shear} to conclude. 

The lemma below achieves this final goal, based on the idea that the bulk of the vorticity cannot concentrate on where the weight $d(y)$ vanishes.
\begin{lemma}\label{lemma:final}
If the solution $\om(t)$ satisfies
\begin{equation}\label{eq:lemma:final}
    \frac{dM}{dt} \le -\frac{\pi\delta}{4}\int_{\Om} d(y) | \om  (z)|e^{-ax}\,dz,
\end{equation}
then for any $N >0 $, there holds
    \begin{equation}\label{eq:lemma:final conclu}
       \lim_{t\to +\infty}  A_N(t):=\lim_{t\to +\infty} \int_{ |x| \le N, \, d(y) \ge \frac1N} | \om  (z)|\,dz =0.
    \end{equation}
\end{lemma}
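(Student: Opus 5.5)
Integrating \eqref{eq:lemma:final} in time and using $M\ge 0$ gives
\begin{equation}
\frac{\pi\delta}{4}\int_0^\infty \int_\Om d(y)|\om(z,t)|e^{-ax}\,dz\,dt \le M(0)<\infty .
\end{equation}
On the set $\{|x|\le N,\ d(y)\ge 1/N\}$ we have $d(y)e^{-ax}\ge N^{-1}e^{-aN}$, so
\begin{equation}
A_N(t)\le N e^{aN}\int_\Om d(y)|\om(t)|e^{-ax}\,dz ,
\end{equation}
and therefore $\int_0^\infty A_N(t)\,dt<\infty$. This only shows that $A_N$ is small on most times, not that it tends to zero. The plan is to upgrade $L^1_t$ integrability to pointwise convergence by a uniform-continuity argument: if $A_N$ is uniformly Lipschitz (or uniformly continuous) in $t$ on $[0,\infty)$, then an integrable nonnegative function must tend to zero. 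Indeed, if $A_N(t_n)\ge \eta$ along $t_n\to\infty$, then $A_N\ge\eta/2$ on intervals of fixed length around each $t_n$, contradicting integrability.

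Since $A_N$ involves a sharp cutoff, I will first replace it by a smooth version. Fix $\varphi\in C_c^\infty(\Om)$ with $0\le\varphi\le 1$, $\varphi=1$ on $\{|x|\le N, d(y)\ge 1/N\}$, and $\Supp\varphi\subset\{|x|\le N+1,\ d(y)\ge 1/(2N)\}$. Then $A_N(t)\le B(t):=\int|\om|\varphi\,dz$, and by the same weight comparison $B\in L^1(0,\infty)$. Corollary \ref{cor:renormal}, with $\beta(s)=|s|$, gives
\begin{equation}
\frac{d}{dt}B(t)=\int_\Om|\om|\,v\cdot\nabla\varphi\,dz-\int_\Om \sign(\om)f''(y)u^y\varphi\,dz .
\end{equation}
The right-hand side is bounded uniformly in time by
\begin{equation}
\|\nabla\varphi\|_{L^1}\,\|\om\|_{L^\infty}\big(\|f\|_{L^\infty}+\|u\|_{L^\infty}\big)+\|f''\|_{L^\infty}\|u\|_{L^\infty}\|\varphi\|_{L^1}.
\end{equation}
By Lemma \ref{lemma:apriori est}, $\|\om(t)\|_{L^\infty}\le\|f''\|_{L^\infty}+\|\om_{in}\|_{L^\infty}$ and $\|u\|_{L^\infty}\lesssim\|\om\|_{L^\infty}$ for all $t$. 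So $B$ is Lipschitz on $[0,\infty)$ with a time-independent constant $L$.

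Now I conclude. Suppose $B(t_n)\ge\eta>0$ with $t_n\to\infty$, and pass to a subsequence with $t_{n+1}-t_n\ge 1$. Then $B\ge\eta/2$ on $[t_n,t_n+\eta/(2L)]$, and summing over $n$ gives $\int_0^\infty B=\infty$, a contradiction. Hence $B(t)\to0$, and since $A_N\le B$, also $A_N(t)\to 0$.

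The main subtlety is making sure the Lipschitz bound is genuinely uniform in $t$. This is exactly why the time-uniform $L^\infty$ bound \eqref{eq:lemma:apriori est 1} is needed: the $L^1$ norm of $\om$ itself may grow, but it never enters, because $\varphi$ is compactly supported. One also needs the justification that Corollary \ref{cor:renormal} applies with $\beta=|s|$, which it does.
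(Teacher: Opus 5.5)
Your proof is correct. It shares the paper's overall logic: the dissipation $\int d(y)|\om|e^{-ax}\,dz$ is integrable in time, and local mass cannot change too quickly. Together these forbid it from staying bounded below on infinitely many windows of fixed length. Where you differ is the tool for the ``cannot change too quickly'' step.

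\textbf{The paper's route.} It argues by contradiction in Lagrangian variables. It follows the flow map $Z$ of $v$ from time $t_n$ and uses $\|v\|_{L^\infty}\lesssim 1$ to keep particles within distance $(2N)^{-1}$ for a time $T$. It then integrates the ODE for $|\om|\circ Z$ to control the source $f''u^y$. Implicitly using that $Z$ is measure preserving, it deduces $A_{2N}(t)\ge \frac{1}{2}A_N(t_n)$ on $[t_n,t_n+T]$, so $M$ drops by a fixed amount on each such window.

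\textbf{Your route.} You stay Eulerian. You smooth the cutoff and apply the renormalization identity of Proposition~\ref{prop:renormal} (with $\beta(s)=|s|$ via Corollary~\ref{cor:renormal}) to a test function supported away from $\partial\Om$. You then observe that the right-hand side is bounded uniformly in time by the time-uniform bounds of Lemma~\ref{lemma:apriori est}. So $B$ is globally Lipschitz, and since $B\in L^1(0,\infty)$, this forces $B\to 0$.

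\textbf{Comparison.} Your argument is shorter. It reuses identities already established instead of invoking trajectories of a log-Lipschitz field and incompressibility. It also gives the slightly stronger quantitative statement $\int_0^\infty A_N(t)\,dt\lesssim_N M(0)/\delta$. The paper's version gives a more transparent transport picture of where the mass goes.

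\textbf{One cosmetic fix.} When you sum over the intervals $[t_n,t_n+\eta/(2L)]$, choose the subsequence with gaps at least $\max\{1,\eta/(2L)\}$ so the intervals are disjoint. Alternatively, just note that their union has infinite measure.
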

\begin{proof}
    Assuming \eqref{eq:lemma:final conclu} does not hold,  there exists a fixed $N>0$ and a sequence of times $t_n \to \infty$ such that
    \begin{equation}\label{aux  lemma:final conclu 1}
        A_N(t_n) \ge \frac{1}{N} >0. 
    \end{equation}
Since $M(t)$ is non-increasing, in this case we also must have
    \begin{equation}\label{aux  lemma:final conclu 2}
        \lim_{t\to +\infty}M(t)=M_{\infty}>0.
    \end{equation}

\textbf{Claim:} There exists $T =T(N, \om_{in}, f)>0$ independent of $n$ such that for any $t \in [t_n, t_n +T ]$
    \begin{equation}\label{aux  lemma:final conclu 3}
    \begin{aligned}
    A_{2N}(t) &   \ge   \frac{1}{2}A_{N}(t_n )   =\frac{1}{2N } .
    \end{aligned} 
    \end{equation}
To prove this claim, we observe that thanks to the renormalization property in Corollary \ref{cor:renormal}, $|\om|$ satisfies 
\begin{equation}\label{aux  lemma:final conclu 4}
    \p_t |\om | + v \cdot \nabla  |\om| = -f''(y)u^y \sign(\om)
\end{equation}
where $v =(f(y),0) +u$ denotes the total velocity. 

Consider for $t\ge t_n$ the flow map $Z(t)$ of $v$ starting at $t=t_n$, namely
\begin{equation}
    \begin{cases}
        \frac{d Z(z,t)}{dt} = v(Z(z,t),t) &\\
        Z(z,t_n) = z \quad \text{for all $z\in  \Om$}.&
    \end{cases}
\end{equation}
It follows from \eqref{aux  lemma:final conclu 4} that upon defining   $|\om | \circ Z = |\om( Z(z,t),t)|$, there holds
\begin{equation}\label{aux  lemma:final conclu 5}
\begin{cases}
   \frac{d |\om | \circ Z  }{dt}  =  \left( -f''(y)u^y \sign(\om) \right)\circ Z  \,  &  \\
    |\om | \circ Z |_{t =t_n } = |\om|(t_n) .&
\end{cases}
\end{equation}
Since by Lemma \ref{lemma:apriori est} the total velocity $\|u+(f(y),0)\|_{L^{\infty}} \lesssim 1$ for all times, we first take $T>0$ small such that $ |Z(z,t) -z  |_{L^\infty}  \leq (2N)^{-1}$. This implies that
\begin{equation}\label{aux  lemma:final conclu 6}
   \int_{|x| \le 2N, d(y)\ge (2N)^{-1}} |\om(z,t)| \, dz \ge  \int_{|x| \le  N, d(y)\ge N^{-1}} |\om(Z(z,t),t)| \, dz .
\end{equation}
On the other hand, integrating \eqref{aux  lemma:final conclu 5} on $[t_n,t]$ gives
\begin{equation}\label{aux  lemma:final conclu 7}
    |\om(Z(z,t),t)| \ge |\om(z,t_n)| - \int_{t_n}^t    | f''(y) u^y(z,\tau) | \circ Z(z,\tau )\, d\tau .
\end{equation}
Further integrating \eqref{aux  lemma:final conclu 7} on the spatial domain $|x| \le N , d(y) \ge N^{-1}$ gives
\begin{equation}\label{aux  lemma:final conclu 8}
\begin{aligned}
\int_{|x| \le  N, d(y)\ge N^{-1}} |\om(Z(z,t),t)| \, dz &    \ge  \int_{|x| \le  N, d(y)\ge N^{-1}} |\om(z,t_n)| \, dz \\
& \qquad - 4 (t-t_n)\| f''\|_{L^\infty} \| u\|_{L^\infty} N^2 ,
\end{aligned}
\end{equation}
where we have bounded the source term by its supremum times the total measure of the domain.

By choosing $T>0$ smaller (depending on $f$ and $\om_{in}$, thanks to Lemma \ref{lemma:apriori est}) if needed, it follows from \eqref{aux  lemma:final conclu 6} and \eqref{aux  lemma:final conclu 8}  that
\begin{equation}\label{aux  lemma:final conclu 9}
   \int_{|x| \le 2N, d(y)\ge (2N)^{-1}} |\om(z,t)| \, dz \ge  \int_{|x| \le  N, d(y)\ge N^{-1}} |\om(Z(z,t),t)| \, dz \ge (2N)^{-1}.
\end{equation}
Thus, the claim has been proven.

Finally, from the claim, we see that for any $t \in [t_n,t_n +T ]$, there holds
    \begin{equation}\label{aux  lemma:final conclu 10}
    \begin{aligned}
    \int_{\Om} d(y) | \om  (z,t)|e^{-ax} \,dz  
     & \ge  \int_{x \le 2N, d(y)   \ge \frac{1}{2N}} d(y) | \om  (z,t)|e^{-ax} \,dz\\
    & \ge \frac{e^{-2aN}}{2N} \int_{x \le 2N, d(y) \ge \frac{1}{2N}}  | \om  (z,t)|  \,dz \\
    & \ge \frac{e^{-2aN}}{2N} A_{2N}(t)  \ge  \frac{  e^{-2aN}}{2N^2}  : = C_N > 0.
    \end{aligned}
    \end{equation}
    By this lower bound, \eqref{eq:lemma:final}   implies that along the sequence $t_n$, we have
    \begin{equation}\label{aux eq:lemma:final conclu}
        M(t_n+T) \le M(t_n)- T C_{ N }. 
    \end{equation}
    Since $t_n \to \infty$, we can iterate \eqref{aux eq:lemma:final conclu} infinitely many times, which leads to a contradiction to \eqref{aux  lemma:final conclu 2}. This completes the proof.
\end{proof}

By upgrading the local decay to be uniform in $y$, we can establish the vanishing statement \eqref{eq:thm:shear conclusion} in Theorem \ref{thm:shear}. 
 
\begin{corollary}\label{cor:vanishing om}
    For any $N >0 $, there holds
    \begin{equation}\label{eq:co:final conclu}
       \lim_{t\to +\infty}   \int_{ |x| \le N} | \om  (z)|\,dz =0.
    \end{equation}
\end{corollary}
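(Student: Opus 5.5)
Since Lemma \ref{lemma:final} already gives decay of $\int_{|x|\le N,\, d(y)\ge 1/N}|\om|\,dz$, the plan is to show that the remaining boundary strips carry uniformly small mass. Fix $N>0$ and $\eta\in(0,1/2)$. Split
\begin{equation}
\int_{|x|\le N}|\om(z,t)|\,dz = \int_{|x|\le N,\, d(y)\ge \eta}|\om|\,dz + \int_{|x|\le N,\, d(y)< \eta}|\om|\,dz .
\end{equation}
For the first term, take $N' \ge \max\{N, 1/\eta\}$. Then $\{|x|\le N,\ d(y)\ge\eta\}\subset\{|x|\le N',\ d(y)\ge 1/N'\}$, and Lemma \ref{lemma:final} (applicable by Proposition \ref{prop:energy prop}) shows this term tends to $0$ as $t\to\infty$.

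For the second term, use only the uniform $L^\infty$ bound \eqref{eq:lemma:apriori est 1}: $\|\om(t)\|_{L^\infty}\le \|f''\|_{L^\infty}+\|\om_{in}\|_{L^\infty}=:B$ for all $t\ge0$. The strip $\{|x|\le N,\ d(y)<\eta\}$ has measure $4N\eta$, so
\begin{equation}
\int_{|x|\le N,\, d(y)< \eta}|\om(z,t)|\,dz \le 4N\eta B \quad\text{for all } t\ge 0.
\end{equation}
Hence $\limsup_{t\to\infty}\int_{|x|\le N}|\om|\,dz \le 4N\eta B$. Since $\eta$ is arbitrary and $B$ is independent of $\eta$ and $t$, the limit is $0$.

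This yields \eqref{eq:thm:shear conclusion}, because any compact $K\subset\Om$ lies in some $\{|x|\le N\}$. For the weak convergence in Theorem \ref{thm:shear}, fix $1<p<\infty$. The family $\om(t)$ is bounded in $L^p$ by interpolating the uniform $L^\infty$ bound with $\|\om(t)\|_{L^1}\lesssim e^{aN}M(t)\le e^{aN}M(0)$ on $\{x\le N\}$. For a test function $g\in L^{p'}$ with compact support, $\int g\,\om(t)\to 0$ follows from the local $L^1$ decay and the $L^\infty$ bound, and density extends this to all of $L^{p'}$.

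One subtlety in the boundedness claim: the $L^p$ bound on $\{x>N\}$ needs global $L^1$ control, which $M(t)$ does not give directly. If a uniform-in-time bound is not readily available, I would instead run the density argument with $L^{p'}$ test functions supported in $\{x\le N\}$ and handle the far field separately. That is the only delicate point. The corollary itself involves no real obstacle: the key observation is that the boundary layer is controlled purely by its vanishing measure together with the uniform $L^\infty$ bound.
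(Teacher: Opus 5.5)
Your proof of the corollary is correct and is essentially the paper's argument: you split $\{|x|\le N\}$ into the boundary strips $\{d(y)<\eta\}$ (the paper uses $d(y)\le 1/M$), bound the strips by their measure $4N\eta$ times the uniform bound \eqref{eq:lemma:apriori est 1}, and send the interior part to zero via Lemma \ref{lemma:final} applied with a larger index. Your weak-convergence paragraph goes beyond the statement, and, as you note yourself, its density step would need a uniform-in-time $L^p$ bound on $\{x>N\}$, which $M(t)$ does not provide.
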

\begin{proof}
    For any $\delta>0$, we show that there exists $T_{\delta}>0$ such that
    $$\int_{ |x| \le N} | \om  (z,t)|\,dz \le \delta$$ 
    for any $t \ge T_{\delta}$. Choosing $M>N$ such that $\frac{2N\| \om(t)\|_{L^{\infty}}}{M} \le \frac{\delta}{2}$, then we have
    \begin{equation*}
        \begin{aligned}
            \int_{ |x| \le N} | \om  (z,t)|\,dz &= \int_{ |x| \le N,d(y) \le \frac1M} | \om  (z,t)|\,dz+\int_{ |x| \le N,d(y)\ge \frac1M} | \om  (z,t)|\,dz \\
            & \le \frac{4N\| \om (t)\|_{L^{\infty}}}{M}+A_{M}(t).
        \end{aligned}
    \end{equation*}
    Thanks to \eqref{eq:lemma:final conclu}, taking $T_{\delta}$ large enough such that $A_M(t)\le \frac{\delta}{2}$ for any $t \ge T_{\delta}$, we obtain the desired conclusion.
\end{proof}
 
Finally, we note that, as in the previous section, the local convergence of the vorticity implies that of the velocity field. 

\begin{corollary}\label{cor: u decay}
Let  $\om(t)$ be a Yudovich solution in Theorem \ref{thm:shear} and $u(t)$ be its velocity field.   For any $R>0$, we have 
    $$\lim_{t \to \infty} \|u(t)\|_{L^{\infty}((-R, R]\times[0,1])}=0.$$
\end{corollary}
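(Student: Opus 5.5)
The plan is to split the Biot--Savart integral for $u(z)$ at a fixed point $z=(x,y)$ with $|x|\le R$ into a near part and a far part, using the exponential screening of Lemma \ref{lemma:kernal K}. Corollary \ref{cor:vanishing om} supplies the local $L^1$ decay, and the uniform bound \eqref{eq:lemma:apriori est 1} controls $\|\om(t)\|_{L^\infty}$. Fix $L\ge 1$ to be chosen. We write
\[
u(z)=\int_{|x'|\le R+L} K(z,z')\om(z',t)\,dz' + \int_{|x'|> R+L} K(z,z')\om(z',t)\,dz' =: I_1+I_2 .
\]

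For the far part we only need the far-field bound in \eqref{eq bound K}. When $|x'|>R+L$ we have $|x-x'|\ge L\ge1$, so $|K|\lesssim e^{-\pi|x-x'|}$. This gives
\[
|I_2|\lesssim \|\om(t)\|_{L^\infty}\int_{|s|\ge L}e^{-\pi|s|}\,ds\lesssim (\|f''\|_{L^\infty}+\|\om_{in}\|_{L^\infty})\,e^{-\pi L},
\]
which is uniform in $t$ and in $z$. Given $\eta>0$, we choose $L$ so that $|I_2|\le\eta/2$ for all $t$.

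For the near part, Corollary \ref{cor:vanishing om} with $N=R+L$ gives $\|\om(t)\|_{L^1(\{|x'|\le R+L\})}\to0$. The kernel is only weakly singular, so we interpolate by splitting once more at a radius $r\in(0,1]$:
\[
|I_1|\le \int_{|z-z'|\le r}\frac{|\om(z')|}{|z-z'|}\,dz' + \int_{|z-z'|>r,\ |x'|\le R+L} |K(z,z')||\om(z')|\,dz'\lesssim r\,\|\om\|_{L^\infty}+ r^{-1}\|\om(t)\|_{L^1(|x'|\le R+L)} .
\]
We first take $r$ small so that the first term is at most $\eta/4$, using the uniform $L^\infty$ bound. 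We then take $t$ large so that the second term is at most $\eta/4$. All bounds are uniform in $z\in[-R,R]\times[0,1]$, so $\limsup_{t\to\infty}\|u(t)\|_{L^\infty}\le\eta$, and $\eta$ is arbitrary.

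The only delicate point is uniformity near the boundary $y\in\{0,1\}$. The estimate $|K|\lesssim|z-z'|^{-1}$ from Lemma \ref{lemma:kernal K} holds on all of $\Omega$, including at the boundary, so the near-field integral over the half-disc intersected with $\Omega$ is still $\lesssim r$. No further obstacle is expected. The argument uses only the time-uniform $L^\infty$ bound and the local $L^1$ decay, not any rate.
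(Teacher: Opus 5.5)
Your proof is correct and follows the paper's route: split the Biot--Savart integral into a far part, bounded uniformly in time by the exponential screening $e^{-\pi|x-x'|}$ and \eqref{eq:lemma:apriori est 1}, and a near part, controlled by the local $L^1$ decay of Corollary \ref{cor:vanishing om}. Your extra splitting at radius $r$, which gives $r\|\om\|_{L^\infty}+r^{-1}\|\om\|_{L^1}$, spells out the $L^1$-to-$L^\infty$ step that the paper leaves implicit when it passes from \eqref{aux cor: u decay 4} to \eqref{aux cor: u decay 5}.
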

\begin{proof}
   Fix $R>0$. We need to prove that for any  $\delta>0$, there exists $T>0$ such that for any $t \ge T$,
    \begin{equation}
        \|u(t)\|_{L^{\infty}((-R, R]\times[0,1])} \le \delta . 
    \end{equation}
For a sufficiently large $N$, consider the decomposition
\begin{equation}\label{aux cor: u decay 1}
\begin{aligned}
\|u \|_{L^{\infty}((-R, R]\times[0,1])} & \leq   \big\|u[ \om   \mathbf{1}_{|x|\le N }] \big\|_{L^{\infty}}  \\
& \quad     + \sup_{|x| \leq R} |u[\om   \mathbf{1}_{|x|\ge N} ] | .
\end{aligned}
\end{equation}
    
We first choose $N \gg R$  such that   the exponential decay of the Biot-Savart kernel implies  
\begin{equation}\label{aux cor: u decay 3}
\sup_{|x| \leq R}\big|u[ \om \mathbf{1}_{|x| \ge N }] (z,t) \big| \lesssim \| \om (t)\|_{L^{\infty}}e^{-\pi(N-R)}\le  \frac{\delta}{2}.
\end{equation}
Note \eqref{aux cor: u decay 3} holds uniformly in time.

Next, by Corollary \ref{cor:vanishing om}, we have
\begin{equation}\label{aux cor: u decay 4}
\int_{|x| \le N  } |\om(z,t)| \, dz  \to 0 \quad \text{as $t \to \infty$},
\end{equation}
and hence by choosing $T$ sufficiently large it follows that
\begin{equation}\label{aux cor: u decay 5}
\big\|u[ \om   \mathbf{1}_{|x|\le N }] \big\|_{L^{\infty}} \leq \frac{\delta}{2} \quad \text{for all $t \ge T$}.
\end{equation}

Finally, combining   \eqref{aux cor: u decay 3} and \eqref{aux cor: u decay 5} with \eqref{aux cor: u decay 1} gives
\begin{equation*}
        \|u(t)\|_{L^{\infty}((-R, R]\times[0,1])} \le \delta \quad \text{for all $t \ge T$}. 
    \end{equation*}    
    
\end{proof}

\section{Discussion}\label{sec:discuss}

Shear flows are exact stationary solutions across a vast hierarchy of domains, ranging from bounded periodic channels to the unbounded whole plane. Our results highlight that hydrodynamic stability is not intrinsic to the flow profile alone; rather, it is fundamentally determined by the interplay between the domain geometry and the class of admissible perturbations.

The Couette flow provides a striking illustration of this dichotomy. Although the base shear profile remains identical in both settings, the $x$-periodicity imposed in previous studies~\cite{BedrossianMasmoudi15,MR4076093} fundamentally alters the long-time dynamics. In the infinite channel $\mathbb{R} \times [0,1]$, the open topology allows vorticity to escape to spatial infinity.

This ``escape mechanism'' precludes the recirculation and echoes that drive instability in periodic domains~\cite{MR4630602}, thereby permitting asymptotic stability even for rough (Yudovich) perturbations. To our knowledge, this result represents one of the rare instances in the analysis of nonlinear PDEs where the regularity required for asymptotic stability coincides exactly with the threshold for well-posedness.

Finally, we note that extending this asymptotic stability to general shear flows $f>0$ in the whole plane $\mathbb{R}^2$ or the half plane remains open. In the  finite-width channel, the Dirichlet boundaries induce a spectral gap that enforces exponential decay of the Green’s function. In contrast, the Green's function in $\mathbb{R}^2$ or $\mathbb{R}\times \R^+$ decays only algebraically, leading to stronger interactions that will likely require new techniques to control.

\bibliographystyle{plain}
\bibliography{euler_stability}

\end{document}